\newcommand{\IR}{\mathbb R}
\newcommand{\II}{\mathbb I}
\newcommand{\IQ}{\mathbb Q}
\newcommand{\w}{\omega}
\newcommand{\e}{\varepsilon}
\newcommand{\pr}{\mathrm{pr}}
\newcommand{\M}{\mathcal M}
\newcommand{\Ra}{\Rightarrow}
\newtheorem{theorem}{Theorem}
\newtheorem{problem}{Problem}
\newtheorem{lemma}{Lemma}
\newtheorem{corollary}{Corollary}
\newtheorem{claim}{Claim}
\theoremstyle{definition}
\newtheorem{remark}{Remark}
\title[Selection properties of the split interval]{Selection properties of the split interval\\ and the Continuum Hypothesis}
\author{Taras Banakh} 
\address{Ivan Franko National University of Lviv (Ukraine) and Jan Kochanowski University in Kielce (Poland)}
\email{t.o.banakh@gmail.com}
\keywords{Continuum Hypothesis, split interval, measurable selection, Borel selection, usco multimap, fragmentable compact, Rosenthal compact, GO-space}
\subjclass{54C65, 54F05, 54H05, 03E50}
\begin{document} 
\begin{abstract}
We prove that every usco multimap $\Phi:X\to Y$ from a metrizable separable space $X$ to a GO-space $Y$ has an $F_\sigma$-measurable selection. On the other hand, for the split interval $\ddot\II$ and the projection $P:\ddot\II^2\to\II^2$ of its square onto the unit square $\II^2$, the usco multimap $P^{-1}:\II^2\multimap\ddot\II^2$ has a Borel ($F_\sigma$-measurable) selection if and only if the Continuum Hypothesis holds. This CH-example shows that know results on Borel selections of usco maps into fragmentable compact spaces cannot be extended to a wider class of compact spaces.
\end{abstract}
\maketitle

\section{Introduction}
By a {\em multimap} $\Phi:X\multimap Y$ between topological spaces $X,Y$ we understand any subset $\Phi\subset X\times Y$, which can be thought as a function   assigning to every point $x\in X$ the subset $\Phi(x):=\{y\in Y:\langle x,y\rangle\in\Phi\}$ of $Y$. For a subset $A\subset X$ we put $\Phi[A]=\bigcup_{x\in A}\Phi(x)$. Each function $f:X\to Y$ can be thought as a single-valued multimap $\{\langle x,f(x)\rangle:x\in X\}\subset X\times Y$.

For a multimap $\Phi:X\multimap Y$, its {\em inverse multimap} $\Phi^{-1}:Y\multimap X$ is defined by $\Phi^{-1}:=\{\langle y,x\rangle:\langle x,y\rangle\in\Phi\}$.

A multimap $\Phi:X\multimap Y$ is called 
\begin{itemize}
\item {\em lower semicontinuous} if  for any open set $U\subset Y$ the set $\Phi^{-1}[U]$ is open in $X$;
\item {\em upper semicontinuous} if for any closed set $F\subset Y$ the set $\Phi^{-1}[F]$ is closed in $X$;
\item {\em Borel-measurable} if for any Borel set $B\subset Y$ the set $\Phi^{-1}[B]$ is Borel in $X$;
\item {\em compact-valued} if for every $x\in X$ the subspace $\Phi(x)$ of $Y$ is compact and non-empty;
\item {\em usco} if $\Phi$ is upper semicontinuous and compact-valued.
\end{itemize}
It is well-known that for any surjective continuous function $f:X\to Y$ between compact Hausdorff spaces, the inverse multimap $f^{-1}:Y\multimap X$ is usco.

Let $\Phi:X\multimap Y$ be a multimap between topological spaces. 
A function $f:X\to Y$ is called a {\em selection} of $\Phi$ if $f(x)\in\Phi(x)$ for every $x\in X$. The Axiom of Choice ensures that every multimap $\Phi:X\multimap Y$ with non-empty values has a selection. The problem is to find selections possessing some additional properties like the continuity or measurability.

One of classical results in this direction is the following theorem of Kuratowski and Ryll-Nardzewski \cite{KRN} (see also \cite[\S5.2]{Sri} or \cite[6.12]{RS}).

\begin{theorem} Let $X,Y$ be Polish spaces.  Any Borel-measurable multimap $\Phi:X\multimap Y$ with non-empty values has a Borel-measurable selection.     
\end{theorem}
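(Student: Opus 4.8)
The plan is to realize the selection as the uniform limit of a sequence of countably-valued measurable approximations, via the classical successive-refinement scheme. Fix a complete metric $d\le 1$ on $Y$ (possible since $Y$ is Polish) together with a countable dense set $\{y_k:k\in\w\}\subset Y$. Writing $B(y,r)$ for the open $d$-ball of radius $r$ about $y$, the hypothesis that $\Phi$ is Borel-measurable supplies exactly the ingredient the scheme needs: for every $k$ and every rational $r>0$ the set
$$\Phi^{-1}[B(y_k,r)]=\{x\in X:\Phi(x)\cap B(y_k,r)\ne\emptyset\}$$
is Borel in $X$. I shall build measurable maps $f_n:X\to\{y_k:k\in\w\}$ obeying the two invariants $d(f_n(x),\Phi(x))<2^{-n}$ and $d(f_{n+1}(x),f_n(x))<2^{-n+1}$ for all $x$, so that $(f_n(x))_{n\in\w}$ is uniformly Cauchy.

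The inductive step is where the measurability hypothesis is spent. Assuming $f_n$ has been constructed with $d(f_n(x),\Phi(x))<2^{-n}$, for each $x$ I pick a point $p\in\Phi(x)$ within $2^{-n}$ of $f_n(x)$ and then a center $y_k$ within $2^{-n-1}$ of $p$; such a $y_k$ satisfies both $\Phi(x)\cap B(y_k,2^{-n-1})\ne\emptyset$ and $d(y_k,f_n(x))<2^{-n}+2^{-n-1}$. I then define $f_{n+1}(x):=y_k$ for the \emph{least} index $k$ enjoying these two properties. The level set $\{x:f_{n+1}(x)=y_k\}$ equals
$$f_n^{-1}\bigl[B(y_k,2^{-n}+2^{-n-1})\bigr]\cap\Phi^{-1}\bigl[B(y_k,2^{-n-1})\bigr]\setminus\bigcup_{j<k}\{x:f_{n+1}(x)=y_j\},$$
a Boolean combination of Borel sets (the first factor is Borel because $f_n$ is measurable, the second by the hypothesis on $\Phi$); hence $f_{n+1}$ is measurable, and both invariants are maintained with room to spare, since $d(f_{n+1}(x),\Phi(x))\le d(y_k,p)<2^{-(n+1)}$ and $d(f_{n+1}(x),f_n(x))<2^{-n}+2^{-n-1}<2^{-n+1}$. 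For the base step one may take $f_0\equiv y_0$, since $d\le1$.

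Finally, the uniform Cauchy condition forces $f_n$ to converge uniformly to a map $f:X\to Y$, which is measurable as a pointwise limit of measurable functions into the metric space $Y$. For each $x$ we have $d(f_n(x),\Phi(x))<2^{-n}\to0$, so $f(x)$ lies in the closure of $\Phi(x)$; since the values of $\Phi$ are closed, $f(x)\in\Phi(x)$ and $f$ is the desired selection. I expect the main obstacle to be the simultaneous bookkeeping in the inductive step: one must choose $y_k$ close enough to $\Phi(x)$ to propagate the first invariant yet close enough to $f_n(x)$ to propagate the second, all through sets whose Borelness rests solely on the weak measurability that preimages of open sets under $\Phi^{-1}[\,\cdot\,]$ are Borel. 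If the values of $\Phi$ are merely nonempty rather than closed, the construction still produces a measurable $f$, but only with $f(x)\in\overline{\Phi(x)}$; recovering an honest selection then forces a closed-valued reduction, which is the one point where the bare statement must be read with closed values in mind.
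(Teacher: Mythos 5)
The paper never proves this statement: it is quoted as a classical result of Kuratowski and Ryll-Nardzewski, with references to \cite{KRN}, \cite[\S5.2]{Sri} and \cite[6.12]{RS}, and is used in the paper only as background motivation. So there is no in-paper argument to compare yours against; what you have written is the standard successive-approximation proof from the literature, and it is essentially correct. Note that you use only \emph{weak} measurability of $\Phi$ (Borelness of $\Phi^{-1}[B(y_k,r)]$ for open balls), which is weaker than the paper's hypothesis that $\Phi^{-1}[B]$ be Borel for \emph{every} Borel $B\subset Y$, so your scheme in fact proves a formally stronger statement for closed-valued multimaps. Two remarks. First, a cosmetic slip: with $d\le 1$ the base invariant $d(f_0(x),\Phi(x))<2^{-0}=1$ can fail, since the infimum of a metric bounded by $1$ can equal $1$; replace $d$ by $d/2$ (or run the induction with non-strict inequalities) and everything goes through unchanged. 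Second, the closed-values issue you flag at the end is real, but it is an imprecision in the statement as reproduced in the paper rather than a defect of your argument: the theorem in the cited sources is stated for \emph{closed-valued} (weakly) measurable multimaps, and the version for bare non-empty values neither follows from this scheme --- as you note, one only gets $f(x)\in\overline{\Phi(x)}$ --- nor is it what Kuratowski and Ryll-Nardzewski proved. Read with closed values, which is the reading the paper's own references impose, your proof is complete.
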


We recall that a function $f:X\to Y$ between topological spaces is called {\em Borel-measurable} (resp. {\em $F_\sigma$-measurable}) if for every open set $U\subset Y$ the preimage $f^{-1}[U]$ is Borel (or type $F_\sigma$) in $X$. 

$F_\sigma$-Measurable selections of usco multimaps with values in non-metrizable compact spaces were studied by many mathematicians \cite{JR1}, \cite{JR2}, \cite{JR3}, \cite{Hansell}, \cite{HJT}, \cite{JOPV}. Positive results are known for two  classes of compact spaces: fragmentable and linearly ordered.

Let us recall \cite[5.0.1]{Fab} (see also \cite[\S6]{Shah}) that a topological space $K$ is {\em fragmentable} if $K$ has a metric $\rho$ such that for every $\varepsilon>0$  each non-empty subset $A\subset K$ contains a non-empty relatively open set $U\subset A$ of $\rho$-diameter $<\e$. By \cite[5.1.12]{Fab}, each fragmentable compact Hausdorff space contains a metrizable dense $G_\delta$-subspace.

The following selection theorem can be deduced from Theorem 1' and Lemma 6 in  \cite{HJT}.

\begin{theorem}[Hansell, Jayne, Talagrand]\label{t:HJT} Any usco map $\Phi:X\to K$ from a perfectly paracompact space $X$ to a fragmentable compact space $Y$ has an $F_\sigma$-measurable selection.
\end{theorem}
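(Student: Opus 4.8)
The plan is to use the fragmenting metric $\rho$ on $Y$ to build a decreasing sequence of refinements of $\Phi$ whose values have $\rho$-diameter tending to $0$, and then to read off the selection as the unique common point of these values. First I normalize $\rho$ to be lower semicontinuous and bounded by $1$ (the standard normalization for fragmentable compacta), which ensures that $\rho$-diameters are not increased by passing to closures in the compact topology $\tau$ of $Y$. Throughout, the key structural fact is that $\Phi^{-1}[F]$ is $\tau$-closed for every $\tau$-closed $F$, since $\Phi$ is usco.

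For the one-step refinement, fix $\e>0$. Using fragmentability I build transfinitely a decreasing chain of $\tau$-closed sets $Y=F_0\supset F_1\supset\cdots$ together with $\tau$-open sets $V_\alpha$ so that $U_\alpha:=F_\alpha\cap V_\alpha$ is non-empty of $\rho$-diameter $<\e$, with $F_{\alpha+1}:=F_\alpha\setminus U_\alpha$ and $F_\lambda:=\bigcap_{\alpha<\lambda}F_\alpha$ at limits. The process terminates at some ordinal $\theta$ with $F_\theta=\emptyset$, so $Y=\bigsqcup_{\alpha<\theta}U_\alpha$ is a partition into pieces of $\rho$-diameter $<\e$. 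For $x\in X$ let $\alpha(x)$ be the least $\alpha$ with $\Phi(x)\cap U_\alpha\neq\emptyset$; then $\Phi(x)\subset F_{\alpha(x)}$, so $\Phi(x)\cap V_{\alpha(x)}\neq\emptyset$, and the refinement $\Psi(x):=\Phi(x)\cap F_{\alpha(x)}\cap\overline{V_{\alpha(x)}}$ is a non-empty $\tau$-compact subset of $\Phi(x)$ of $\rho$-diameter $<\e$. Iterating this with $\e=2^{-n}$ along compatible (nested) fragmentation chains yields refinements $\Phi=\Phi_0\supset\Phi_1\supset\cdots$ with $\mathrm{diam}_\rho\,\Phi_n(x)\le 2^{-n}$. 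For each $x$ the sets $\Phi_n(x)$ are non-empty, $\tau$-compact and nested, so $\bigcap_n\Phi_n(x)$ is non-empty and of $\rho$-diameter $0$; as $\rho$ is a metric this intersection is a single point $f(x)\in\Phi(x)$, giving a selection.

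The measurability bookkeeping is the crux. The level sets $\{x:\alpha(x)>\alpha\}=\Phi^{-1}[F_{\alpha+1}]$ are $\tau$-closed, so each stratum $\{x:\alpha(x)=\alpha\}$ is a difference of two closed sets, hence $F_\sigma$ in the perfect space $X$. The genuine obstacle is that $\theta$ may be uncountable, so the induced partition of $X$ into strata is a priori uncountable and does not obviously yield $F_\sigma$ preimages for $f$. This is exactly where perfect paracompactness is indispensable: the transfinite partition $\{\,\Phi^{-1}[F_\alpha]\setminus\Phi^{-1}[F_{\alpha+1}]:\alpha<\theta\,\}$ can be reorganized, via the $\sigma$-discrete decomposition lemma (Lemma 6 of \cite{HJT}), into a $\sigma$-discrete family of $F_\sigma$ sets on each of which the chosen piece $U_\alpha$ is fixed. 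On these pieces one defines a simple map $f_n$ equal to a marked point of $U_{\alpha(x)}$; then $f_n$ is $\sigma$-discretely $F_\sigma$-measurable and $\rho(f_n(x),f(x))\le 2^{-n}$.

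It remains to conclude $F_\sigma$-measurability of $f$ with respect to $\tau$. Since $f_n\to f$ uniformly in $\rho$ and each $f_n$ is $F_\sigma$-measurable, I transfer $\tau$-open sets to $\rho$-approximations along the pieces and argue that the uniform $\rho$-limit of $\sigma$-discretely $F_\sigma$-measurable maps is again $F_\sigma$-measurable, so that for every $\tau$-open $W\subset Y$ the preimage $f^{-1}[W]$ is assembled from countably many $\sigma$-discrete families of $F_\sigma$ sets and is therefore $F_\sigma$ in $X$. I expect the main difficulty to lie precisely in this last paragraph: the $\sigma$-discrete reindexing under perfect paracompactness, together with the careful passage from the $\rho$-controlled construction back to the $\tau$-Borel structure of $Y$ (these two points being supplied, respectively, by Lemma 6 and Theorem 1' of \cite{HJT}). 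The convergence and usco-preservation arguments in the preceding steps are, by contrast, routine.
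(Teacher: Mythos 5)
Your overall route --- fragment $Y$, refine $\Phi$ transfinitely, stratify $X$ by the first fragmentation piece that $\Phi(x)$ meets, and then hand the $\sigma$-discrete bookkeeping to Lemma~6 and Theorem~$1'$ of \cite{HJT} --- is the same reduction the paper itself makes; in fact the paper offers no proof at all beyond the remark that the theorem ``can be deduced from Theorem 1$'$ and Lemma 6 in \cite{HJT}''. The genuine gap is in the part you carry out by hand. Your opening move, ``normalize $\rho$ to be lower semicontinuous (the standard normalization for fragmentable compacta)'', is not a normalization but a strengthening of the hypothesis: by Namioka's characterization, a compact Hausdorff space fragmented by a $\tau$-lower semicontinuous metric is exactly a Radon--Nikod\'ym compact, and the class of fragmentable compacta is strictly larger (continuous images of RN compacta are fragmentable, yet by the example of Avil\'es and Koszmider such an image need not be RN). So for a general fragmentable $Y$ no such $\rho$ exists, and everything downstream that leans on it collapses: without lower semicontinuity, taking $\tau$-closures (which you must do to keep the refined values compact) does not preserve small $\rho$-diameter, the intersection $\bigcap_n\Phi_n(x)$ need not be a singleton, and the ``uniform $\rho$-limit'' argument of your final paragraph has nothing to converge to. Lower semicontinuity is automatic in the actual setting of \cite{HJT} --- weakly usco maps into a Banach space, where the fragmenting metric is the norm, which is weakly lower semicontinuous --- and that is presumably why you took it as standard; for abstract fragmentable compacta it is precisely the obstacle a correct deduction has to circumvent.

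A second, smaller flaw: even when $\rho$ is lower semicontinuous, your one-step refinement $\Psi(x)=\Phi(x)\cap F_{\alpha(x)}\cap\overline{V_{\alpha(x)}}$ need not have $\rho$-diameter $<\varepsilon$, because $F_\alpha\cap\overline{V_\alpha}$ is in general not contained in $\overline{F_\alpha\cap V_\alpha}$: a point of $F_\alpha$ may lie in $\overline{V_\alpha}$ while every nearby point of $V_\alpha$ lies outside $F_\alpha$. The repair is to intersect with $\overline{U_{\alpha(x)}}=\overline{F_{\alpha(x)}\cap V_{\alpha(x)}}$ instead, since lower semicontinuity gives $\mathrm{diam}_\rho\,\overline{U_\alpha}=\mathrm{diam}_\rho\,U_\alpha<\varepsilon$; but this repair is again available only under the lower semicontinuity you cannot assume.
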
 

A similar selection theorem holds for usco maps into countably cellular GO-spaces.
A Hausdorff topological space $X$ is called a {\em generalized ordered space} (briefly, a {\em GO-space}) if $X$ admits a linear order $\le$ such that the topology of $X$ is generated by a base consisting of open order-convex subsets of $X$. A subset $C$ of a linearly ordered space $X$ is called {\em order-convex} if for any points $x\le y$ in $C$ the order interval $[x,y]:=\{z\in X:x\le z\le y\}$ is contained in $X$. We say that the topology of $X$ is generated by the linear order $\le$ if the topology of $X$ is generated by the subbase $\{(\leftarrow,a),(a,\to):a\in X\}$ consisting the the order-convex sets $(\leftarrow,a):=\{x\in X:x<a\}$ and $(a,\to)=\{x\in X:a<x\}$.%,a)$ and $(a,\to)$ we shall also consider the closed intervals $(\leftarrow,a]:=X\setminus(a,\to)$ and $[a,\to):=X\setminus(\leftarrow,a)$ in $X$. 

A topological space $X$ is {\em countably cellular} if every disjoint family of open sets in $X$ is at most countable. It is easy to see that each separable topological space is countably cellular. A topological space is called {\em $F_\sigma$-perfect} if every open set in $X$ is of type $F_\sigma$ in $X$ (i.e., can be represented as the countable union of closed sets). For example, every metruzable space is $F_\sigma$-perfect.

The following  selection theorem will be proved in Section~\ref{s:L}.

\begin{theorem}\label{t:L} Let $Y$ be a GO-space and $X$ be an ($F_\sigma$-perfect) topological space. If $X$ or $Y$ is countably cellular, then any usco map $\Phi:X\multimap Y$  has a Borel ($F_\sigma$-measurable) selection.
\end{theorem}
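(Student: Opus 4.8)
The plan is to base the construction on the \emph{first-point selection} $f(x):=\min\Phi(x)$. This is well defined: since $\Phi(x)$ is a nonempty compact subset of the linearly ordered space $Y$, it has a least element (if it had none, the open cover $\{(y,\to):y\in\Phi(x)\}$ of $\Phi(x)$ would have no finite subcover). The one easy semicontinuity fact is that $\{x:f(x)>b\}=\{x:\Phi(x)\subseteq(b,\to)\}$ is open for every $b\in Y$ by upper semicontinuity applied to the open ray $(b,\to)$, so dually $\{x:f(x)\le b\}=\Phi^{-1}[(\leftarrow,b]]$ is closed. To prove $f$ is $F_\sigma$-measurable it then suffices to show two things: that $f^{-1}[C]$ is $F_\sigma$ for every order-convex open $C\subseteq Y$, and that an arbitrary open $U\subseteq Y$ reduces to countably many such $C$. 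For the first point, $f^{-1}[C]$ is always the intersection of an open set (the lower-endpoint condition, a set of the form $\{\Phi(x)\subseteq(a,\to)\}$ or $\{\Phi(x)\subseteq[a,\to)\}$) with a set $f^{-1}[(\leftarrow,b)]$ or $f^{-1}[(\leftarrow,b]]$; as open sets are $F_\sigma$ in the $F_\sigma$-perfect space $X$, everything comes down to showing that $f^{-1}[(\leftarrow,a)]=\bigcup_{b<a}\{x:f(x)\le b\}$ is $F_\sigma$, i.e. that this increasing union of closed sets reduces to a countable subunion. For the second point, the key observation is that the maximal order-convex subsets (the \emph{convex components}) of any open $U\subseteq Y$ are pairwise disjoint order-convex open sets covering $U$.

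The case where $Y$ is countably cellular is then clean. The convex components of $U$ form a disjoint family of open sets, so there are only countably many, reducing $f^{-1}[U]$ to a countable union of sets $f^{-1}[C]$. For the remaining point I would prove the dichotomy: for every $a\in Y$, either $a$ has an immediate predecessor, or $(\leftarrow,a)$ has countable cofinality. Indeed, if neither held one could choose a strictly increasing sequence $(b_\xi)_{\xi<\omega_1}$ cofinal in $(\leftarrow,a)$; the disjoint intervals $(b_\xi,b_{\xi+1})$ are empty for all but countably many $\xi$ by cellularity, so after thinning every $b_{\xi+1}$ is the immediate successor of $b_\xi$, whence each $\{b_{\xi+1}\}=(b_\xi,b_{\xi+2})$ is open, an uncountable disjoint family of open singletons, contradicting cellularity. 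In either alternative $(\leftarrow,a)=\bigcup_n(\leftarrow,b_n]$ for countably many $b_n$, so $f^{-1}[(\leftarrow,a)]=\bigcup_n\{x:f(x)\le b_n\}$ is $F_\sigma$, and $f$ is $F_\sigma$-measurable (Borel even without $F_\sigma$-perfectness).

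The case where $X$ is countably cellular is the main obstacle, and here the first-point selection need \emph{not} work: for $X=2^{\omega_1}$ and the usco map $x\mapsto[g(x),\omega_1]\subseteq\omega_1+1$ with $g(x)$ the supremum of the support of $x$, the function $\min\Phi$ is the non-Borel ``bounded support'' indicator, even though the constant selection $\omega_1$ is continuous. The lesson is to exploit the multivaluedness, so the plan is to pass first to a \emph{minimal} usco submap $\Psi\subseteq\Phi$ (which exists by Zorn's lemma, and whose selections are selections of $\Phi$) and to take $f=\min\Psi$. Minimality tames the pathology: it forces the open set $\{x:\Psi(x)\subseteq(\leftarrow,a)\}=\{x:\max\Psi(x)<a\}$ to be dense in $\{x:\min\Psi(x)<a\}$, so that the difference lies in the closed set $\{x:\max\Psi(x)\ge a\}=\Psi^{-1}[[a,\to)]$ and is nowhere dense. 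I would then run a Cantor--Bendixson-style derivation peeling off this dense open part, passing to the nowhere dense closed remainder, and repeating; countable cellularity of $X$ bounds the length of the derivation by a countable ordinal, exhibiting $\{x:\min\Psi(x)<a\}$ as a countable union of differences of closed sets, hence Borel. The hard part I expect is regenerating the density property at successor and limit stages of the derivation (the restriction of a minimal usco map to a closed set need not be minimal), and it is precisely here that cellularity of $X$ must be fed in to keep the bookkeeping countable; the reduction of an arbitrary open $U$ to its convex components is as before.

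Finally, the two measurability requirements are matched to the hypotheses: the bare argument yields preimages of open sets that are Borel, giving a Borel selection under countable cellularity alone, while $F_\sigma$-perfectness of $X$ turns every open set occurring above into an $F_\sigma$-set and is preserved under the finite intersections and countable unions used, upgrading the selection to an $F_\sigma$-measurable one.
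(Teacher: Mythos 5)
Your first case ($Y$ countably cellular) is, in substance, the paper's own argument (Lemma~\ref{l:L1}): the least-element selection, closedness of $\{x:f(x)\le b\}$, decomposition of an open set into its countably many convex components, and a cellularity-to-countable-cofinality step. Two small repairs are needed there: a convex open $C\subseteq Y$ need not have endpoints in $Y$, so the cofinality argument must be run for the lower set $\bigcup_{u\in C}(\leftarrow,u]$ rather than for a ray $(\leftarrow,b)$ or $(\leftarrow,b]$ with $b\in Y$; and your ``thinning'' step does not work as stated, since passing to a subsequence destroys the immediate-successor relation --- but the dichotomy itself is correct and follows more simply by noting that a strictly increasing sequence $(b_\xi)_{\xi<\omega_1}$ produces the uncountable disjoint family of nonempty open intervals $(b_{3\xi},b_{3\xi+2})\ni b_{3\xi+1}$, contradicting cellularity.

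The genuine gap is in the second case ($X$ countably cellular), and you partly acknowledge it yourself. Two steps are missing and would not go through as planned. First, you reduce an arbitrary open $U\subseteq Y$ to its convex components ``as before'', but ``before'' used countable cellularity of $Y$ to conclude there are only countably many components; here $Y$ is not countably cellular, and the reduction fails outright (in your own example $Y=\omega_1+1$, the set of successor ordinals is open with uncountably many convex components). Second, the Cantor--Bendixson-style derivation is not carried out: you state that you cannot regenerate the density property at later stages, and the assertion that countable cellularity of $X$ bounds the derivation by a countable ordinal appeals to a false general principle --- in the countably cellular space $2^{\omega_1}$ there is a strictly decreasing $\omega_1$-chain of closed sets, each nowhere dense in its predecessor (kill $\omega$ new coordinates at each step), so cellularity alone cannot stop such a derivation. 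The paper (Lemma~\ref{l:L2}) avoids all of this with one observation you missed: after passing to a minimal usco $\Psi\subseteq\Phi$, the image $\Psi[X]$ is itself countably cellular. Indeed, if $(U_\alpha)_{\alpha<\omega_1}$ were disjoint nonempty relatively open subsets of $\Psi[X]$, the same minimality property you invoke (Lemma 3.1.2 in Fabian's book) yields nonempty open sets $V_\alpha\subseteq X$ with $\Psi[V_\alpha]\subseteq U_\alpha$; these $V_\alpha$ are pairwise disjoint, contradicting cellularity of $X$. Hence $\Psi:X\multimap\Psi[X]$ is an usco map into a countably cellular GO-space, and your first case applies verbatim, which also resolves the component count, since all components are now taken inside $\Psi[X]$. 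With this replacement your proof closes; without it, the second case is incomplete.
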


Theorems~\ref{t:HJT}, \ref{t:L} suggest the following problem.

\begin{problem} Is it true that any usco map $\Phi:M\multimap K$ from a compact metrizable  space $M$ to a  compact Hausdorff space $K$  has a Borel ($F_\sigma$-measurable) selection?
\end{problem}

In this paper we prove that this problem has negative answer under the negation of the Continuum Hypothesis (i.e., under $\w_1<\mathfrak c$). A suitable counterexample will be constructed using the split square $\ddot\II$, which is the square of the split interval $\ddot\II$. 

The {\em split interval} is the linearly ordered space $\ddot\II=[0,1]\times\{0,1\}$ whose topology is generated by the lexicographic order (defined by $\langle x,i\rangle \le\langle y,j\rangle$ iff either $x<y$ or else $x=y$ and $i<j$). The split interval plays a fundamental role in the theory of separable Rosenthal compacta \cite{Tod}. Let us recall that a topological space is called {\em Rosenthal compact}  if it is homeomorphic to a compact subspace of the space $B_1(P)$ of functions of the first Baire class on a Polish space $P$. It is well-known (and easy to see) that the split interval is Rosenthal compact and so is its square. By Theorem 4 of Todor\v cevi\'c \cite{Tod}, each non-metrizable Rosenthal compact space of countable spread contains a topological copy of the split interval. A topological space has {\em countable spread} if it contains no uncountable discrete subspaces.

%Therefore, Theorem~\ref{t:main} shows fragmentable compact spaces in Corollary~\ref{c:JR} cannot be replaced by Rosenthal compacta. 

By Theorem~\ref{t:L}, any usco map $\Phi:X\multimap\ddot\II$ from an $F_\sigma$-perfect topological space $X$ has an $F_\sigma$-measurable selection. 
In contrast, the split square $\ddot\II^2$ has dramatically different selections properties.
Let $p:\ddot\II\to\II$, $p:\langle x,i\rangle\mapsto x$, be the natural projection of the split interval onto the unit interval $\II=[0,1]$, and $$P:\ddot\II^2\to\II^2,\;\;P:\langle x,y\rangle\mapsto \langle p(x),p(y)\rangle,$$be the  projection of the split square $\ddot\II^2$ onto the unit square $\II^2$.

\begin{theorem}\label{t:main} The following conditions are equivalent:
\begin{enumerate}
\item the usco multimap $P^{-1}:\II^2\to \ddot\II^2$ has a Borel-measurable selection;
\item the usco multimap $P^{-1}:\II^2\to \ddot\II^2$ has an $F_\sigma$-measurable selection;
\item $\w_1=\mathfrak c$.
\end{enumerate}
\end{theorem}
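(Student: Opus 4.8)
The implication (2)$\Ra$(1) is trivial, so the plan is to prove the cycle (1)$\Ra$(3)$\Ra$(2). Throughout I identify a selection of $P^{-1}$ with a pair of functions $\sigma,\tau:\II^2\to\{0,1\}$ via $f(a,b)=\langle\langle a,\sigma(a,b)\rangle,\langle b,\tau(a,b)\rangle\rangle$. The geometric heart of the matter is the behaviour of the four ``corners'' over a point of $\II^2$ along the lines $L_c=\{(a,b):b-a=c\}$ (increasing) and $L'_c=\{(a,b):a+b=c\}$ (decreasing). A short computation with the lexicographic neighbourhoods of $\ddot\II$ shows: over an increasing line the two \emph{mixed} corners ($\sigma\ne\tau$) are isolated points of $P^{-1}(L_c)$, while over a decreasing line the two \emph{pure} corners ($\sigma=\tau$) are isolated points of $P^{-1}(L'_c)$. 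Indeed, a left-looking coordinate forces a neighbouring parameter to the left and a right-looking one forces it to the right, and on an increasing (resp. decreasing) line the combination $(0,1)$ or $(1,0)$ (resp. $(0,0)$ or $(1,1)$) pins the parameter from both sides. This is exactly the classical fact that the antidiagonal of the Sorgenfrey plane $L\times L$ (the left-left corners) is an uncountable discrete subspace.

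For (1)$\Ra$(3), suppose $f$ is Borel and fix an increasing line $\ell=L_c$. Since the mixed corners are isolated in $P^{-1}(\ell)$, the set $\mathcal C$ of mixed corners is open in $P^{-1}(\ell)$, hence $\mathcal C=G\cap P^{-1}(\ell)$ for an open $G\subseteq\ddot\II^2$; as $f[\ell]\subseteq P^{-1}(\ell)$, the set $\{a\in\ell:\sigma\ne\tau\}=f^{-1}[G]\cap\ell$ is Borel. I then show this set is countable: writing $W_a$ for a product neighbourhood isolating the mixed corner over $a$, the set $O_E=\bigcup_{a\in E}W_a$ is open for every $E\subseteq\ell$, and $f^{-1}[O_E]\cap\ell$ recovers $E$ on the mixed part, so \emph{every} subset of the mixed part is Borel; by the perfect set property for Borel sets (an uncountable Borel set contains a Cantor set, which carries non-Borel subsets) the mixed part is countable. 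Thus the mixed set $A=\{\sigma\ne\tau\}$ meets every increasing line in a countable set, and symmetrically the pure set $\II^2\setminus A$ meets every decreasing line in a countable set. Applying the linear change of coordinates $(a,b)\mapsto(b-a,a+b)$, which turns increasing lines into verticals and decreasing lines into horizontals, the square is covered by a set with countable vertical sections and a set with countable horizontal sections; by the classical theorem of Sierpi\'nski this decomposition exists if and only if $\w_1=\mathfrak c$.

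For (3)$\Ra$(2), assume $\w_1=\mathfrak c$ and use Sierpi\'nski's decomposition in the same rotated coordinates to obtain $A\subseteq\II^2$ with countable sections along increasing lines and with $\II^2\setminus A$ having countable sections along decreasing lines (concretely, $A$ is the pullback of $\{(x,y):x\preceq y\}$ for a well-ordering $\preceq$ of $\II$ of type $\w_1$). I then define the selection by $\sigma\equiv0$ and $\tau=\mathbf 1_A$, so that $f$ takes the mixed corner $(0,1)$ on $A$ and the pure corner $(0,0)$ off $A$. The point is that $f$ lands in an isolated corner of a fibre only along the discrete families parametrised by the increasing lines (for $(0,1)$) and by the decreasing lines (for $(0,0)$), and by the choice of $A$ it does so on a countable subset of each such line.

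The main obstacle is the verification in this last step that the constructed $f$ is genuinely $F_\sigma$-measurable, i.e.\ that $f^{-1}[O]$ is of type $F_\sigma$ for \emph{every} open $O\subseteq\ddot\II^2$, not merely for a subbase — the failure of $\ddot\II^2$ to be hereditarily Lindel\"of means open sets can be uncontrolled uncountable unions of boxes. The plan is to split $f^{-1}[O]$ into a genuinely two-dimensional part, which is open because the non-isolated (pure-on-increasing, mixed-on-decreasing) corners behave like the $F_\sigma$-measurable one-dimensional map $a\mapsto\langle a,0\rangle$, and a ``boundary'' part supported on the isolated corners; the countability of the Sierpi\'nski sections guarantees that this second part lies, line by line, in a countable set and assembles into an $F_\sigma$-set. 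Establishing this decomposition rigorously is where the bulk of the work lies; the two directions above then close the equivalence.
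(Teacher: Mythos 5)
Your direction (1)$\Ra$(3) is sound and is essentially the paper's own proof of Lemma~\ref{l1}: the same isolated-corner geometry along the slope-$\pm1$ lines, the same trick that every subset of the isolated part is the trace of an open set under $f^{-1}$ (so the isolated part must be countable), and the same Sierpi\'nski-type counting at the end; your perfect-set justification of countability is a legitimate variant of the paper's observation that under $\w_1<\mathfrak c$ any set of cardinality $\w_1$ is automatically non-Borel.

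The genuine gap is in (3)$\Ra$(2), and it is not only the verification you defer; the construction itself is inadequate. Your $A$, the pullback of the graph of a well-ordering, controls sections only along the two straight-line families of slope $\pm1$. But your own argument from (1)$\Ra$(3) applies verbatim to \emph{every} strictly increasing Borel curve $C$ (the line $b=2a+c$, or the graph of any increasing homeomorphism between subsets of $\II$): the upper-left boxes attached to points of $C$ are pairwise non-covering, so if the selection is even Borel-measurable, the set of points of $C$ receiving the mixed corner must be countable. Hence a workable $A$ must meet every strictly increasing Borel function in a countable set --- a far stronger property than countable slope-$1$ sections, and one that an arbitrary well-ordering fails: one can build $\preceq$ of type $\w_1$ so that some slope-$2$ line meets your $A$ uncountably, and then your $s$ is not Borel-measurable at all. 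This is exactly why the paper, under $\w_1=\mathfrak c$, enumerates \emph{all} infinite strictly monotone Borel functions as $\{f_\alpha\}_{\alpha<\w_1}$ and partitions $\II^2$ according to whether the least-indexed function through a point is increasing or decreasing. Moreover, even with such an $A$ the bulk of the proof is still ahead: proving $s^{-1}[U]$ is $F_\sigma$ for every open $U$ requires reducing an uncountable union of half-open boxes to a countable subunion (the paper's Claim~\ref{cl:main}), and the leftover set of uncovered box-corners is an arbitrary, possibly non-Borel, strictly monotone set; the paper needs the Lavrentiev-type extension Lemmas~\ref{l:i} and \ref{l:d} to embed it into a Borel monotone function before the countability property can be invoked. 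Your closing plan --- that the boundary part ``lies, line by line, in a countable set and assembles into an $F_\sigma$-set'' --- is a non sequitur: a union over uncountably many lines of countable sets need not be $F_\sigma$ (the set $A$ itself is such a union), and the whole difficulty of the implication sits precisely there.
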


The implication $(2)\Ra(1)$ of Theorem~\ref{t:main} is trivial and the implications $(1)\Ra(3)\Ra(2)$ are proved in Lemmas~\ref{l:L1} and \ref{l:main}, respectively.

Combining Theorem~\ref{t:main} with the Todor\v cevi\'c dichotomy for Rosenthal compact spaces, we obtain the following consistent characterization of metrizable compacta.

\begin{corollary} Under $\w_1<\mathfrak c$ a Rosenthal compact space $K$ is metrizable if and only if $K$ has countable spread and each usco multimap $\Phi:\II^2\to K^2$ has a Borel-measurable selection.
\end{corollary}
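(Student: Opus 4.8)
The plan is to prove the two implications separately, establishing the forward one in $\mathrm{ZFC}$ and invoking $\w_1<\mathfrak c$ only for the converse, where it feeds into Theorem~\ref{t:main}. For the forward implication, assume $K$ is metrizable. Being compact metrizable, $K$ is second countable, hence has countable spread, since every discrete subspace of a second-countable space is countable. It remains to produce Borel selections. Here both $\II^2$ and $K^2$ are Polish, and I would first note that every usco map $\Phi:\II^2\multimap K^2$ is Borel-measurable: writing an open set $U\subseteq K^2$ as a countable union $\bigcup_n F_n$ of closed sets (metrizable spaces being $F_\sigma$-perfect) and using upper semicontinuity gives $\Phi^{-1}[U]=\bigcup_n\Phi^{-1}[F_n]$ of type $F_\sigma$. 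The Kuratowski--Ryll-Nardzewski theorem \cite{KRN} then supplies a Borel-measurable selection.

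For the converse, assume $\w_1<\mathfrak c$ and that $K$ is a Rosenthal compact of countable spread for which every usco map $\Phi:\II^2\multimap K^2$ has a Borel selection; I argue by contradiction that $K$ is metrizable. If it is not, then $K$ is a non-metrizable Rosenthal compact of countable spread, so by Todor\v cevi\'c's Theorem~4 in \cite{Tod} it contains a topological copy of the split interval; fix an embedding $h:\ddot\II\to K$ with image $L:=h[\ddot\II]$. The key step is to transfer the non-selectability of $P^{-1}$ through $h$. To this end I would form the embedding $h\times h:\ddot\II^2\to K^2$ onto $L\times L$ and set $\Psi:=(h\times h)\circ P^{-1}:\II^2\multimap K^2$, where $\Psi(x)=(h\times h)\bigl[P^{-1}(x)\bigr]$. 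This $\Psi$ is usco: its values are nonempty compacta, being continuous images of the compacta $P^{-1}(x)$, and for closed $F\subseteq K^2$ the preimage $\Psi^{-1}[F]=\{x:P^{-1}(x)\cap(h\times h)^{-1}[F]\neq\emptyset\}$ is the lower preimage of the closed set $(h\times h)^{-1}[F]$ under the usco map $P^{-1}$, hence closed.

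By hypothesis $\Psi$ has a Borel selection $s:\II^2\to K^2$. Since $s(x)\in\Psi(x)\subseteq L\times L$ for every $x$ and $h\times h$ is a homeomorphism onto $L\times L$, the composite $t:=(h\times h)^{-1}\circ s:\II^2\to\ddot\II^2$ is well-defined and is a selection of $P^{-1}$; moreover it is Borel-measurable, because for open $O\subseteq\ddot\II^2$ one has $t^{-1}[O]=s^{-1}\bigl[(h\times h)[O]\bigr]$, and $(h\times h)[O]$ is the trace on $L\times L$ of an open subset of $K^2$. Thus condition $(1)$ of Theorem~\ref{t:main} holds, and the implication $(1)\Ra(3)$ forces $\w_1=\mathfrak c$, contradicting $\w_1<\mathfrak c$. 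Hence $K$ must be metrizable.

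The substantive mathematics is imported from Theorem~\ref{t:main} and the Todor\v cevi\'c dichotomy; the work specific to the corollary is the transfer carried out in the last two paragraphs. The main point requiring care is that $h$ be a genuine topological embedding rather than merely a continuous surjection onto $L$, so that $(h\times h)^{-1}$ is continuous on $L\times L$ and the pullback $t$ inherits Borel-measurability from $s$; the fact that $s$ actually lands in $L\times L$, which is what makes this pullback meaningful, is guaranteed precisely because $s$ selects $\Psi$. Verifying the upper semicontinuity of the composite $\Psi$ is the one genuinely technical, though routine, check.
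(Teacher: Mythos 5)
Your proof is correct, and it splits into two halves of different character relative to the paper. Your ``if'' direction is essentially the paper's own argument: Todor\v cevi\'c's Theorem~4 of \cite{Tod} produces a copy of $\ddot\II$ in $K$, and Theorem~\ref{t:main} (implication $(1)\Ra(3)$) yields the contradiction with $\w_1<\mathfrak c$. The paper compresses your transfer construction into the phrase ``we lose no generality assuming $\ddot\II\subset K$'' and then regards $P^{-1}$ directly as an usco multimap $\II^2\multimap\ddot\II^2\subset K^2$; your explicit composition $\Psi=(h\times h)\circ P^{-1}$, the check of its upper semicontinuity, and the pullback $t=(h\times h)^{-1}\circ s$ are exactly the details hidden behind that ``WLOG'', so this half is the same route, just made explicit. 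Your ``only if'' direction, however, is genuinely different: the paper derives it from Theorem~\ref{t:HJT} (a compact metrizable space, being fragmentable, falls under Hansell--Jayne--Talagrand), which even gives an $F_\sigma$-measurable selection; you instead note that $K^2$ is Polish, that an usco map $\Phi:\II^2\multimap K^2$ has $F_\sigma$ preimages of open sets, and invoke Kuratowski--Ryll-Nardzewski. Your route is more elementary and self-contained (classical KRN instead of the fragmentability machinery), at the cost of producing only a Borel-measurable selection rather than an $F_\sigma$-measurable one --- which is all the corollary asks for. One caveat: you must apply KRN in its standard form, where the hypothesis is weak measurability (preimages of \emph{open} sets are Borel), since that is exactly what your $F_\sigma$ computation verifies; the paper's Theorem~1 as literally stated assumes the stronger property that preimages of \emph{all} Borel sets are Borel, which your argument does not establish (and which need not hold for usco maps, as such preimages are in general only analytic). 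Since you cite \cite{KRN} directly rather than the paper's formulation, this is a matter of phrasing, not a gap.
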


\begin{proof} The ``only if'' part follows from Theorem~\ref{t:HJT}. To prove the ``if'' part, assume that a Rosenthal compact $K$ is not metrizable but has countable spread. By Theorem~4 of \cite{Tod}, the space $K$ contains a topological copy of the split interval $\ddot\II$. We lose no generality assuming that $\ddot\II\subset K$. 
By Theorem~\ref{t:main}, under $\w_1<\mathfrak c$, the usco multimap $P^{-1}:\II^2\multimap \ddot\II^2\subset K^2$ does not have Borel-measurable selections.
\end{proof}

Now we pose some open problems suggested by Theorem~\ref{t:main}.

\begin{problem} Assume CH. Is it true that each usco map $\Phi:X\to\ddot\II^2$ from a metrizable (separable) space $X$ has a Borel-measurable selection?
\end{problem}

Observe that the map $p:\ddot\II\to \II$ is 2-to-1 and its square $P:\ddot\II^2\to\II^2$ is 4-to-1. A function $f:X\to Y$ is called {\em $n$-to-1} for some $n\in\mathbb N$ if $|f^{-1}(y)|\le n$ for any $y\in Y$. By Theorem 3 of Todor\v cevi\'c \cite{Tod}, every Rosenthal compact space of countable spread admits a 2-to-1 map onto a metrizable compact space. Let us observe that the splitted square $\ddot\II^2$ contains a discrete subspace of cardinality continuum and hence has uncountable spread.
%Theorem~\ref{t:main} suggests the following open problem.

\begin{problem} Let $n\in\{2,3\}$. Is there an $n$-to-1 map $f:K\to M$ from a (Rosenthal) compact space $K$ to a metrizable compact space $M$ such that the inverse multimap $f^{-1}:M\multimap K$ has no Borel selections?
\end{problem}

\begin{remark} Theorem~\ref{t:main} provides  a consistent counterexample to the problem \cite{MO} of Chris Heunen, posed on  {\tt Mathoverflow}. 
\end{remark}

\section{Proof of Theorem~\ref{t:L}}\label{s:L}

Theorem~\ref{t:L} follows from Lemmas~\ref{l:L1} and \ref{l:L2}, proved in this section.

First we prove one lemma, showing that our definition of a GO-space agrees with the original definition of Lutzer \cite{Lutzer}. Probably this lemma is known but we could not find the precise reference in the literature.

\begin{lemma}\label{l:GO} The linear order $\le$ of any GO-space $X$ is a closed subset of the square $X\times X$.
\end{lemma}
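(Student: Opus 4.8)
The plan is to show that the complement $\{\langle x,y\rangle\in X\times X: y<x\}$ of $\le$ is open in $X\times X$. So I fix a pair $\langle x,y\rangle$ with $y<x$ and look for basic open neighbourhoods $U\ni x$ and $V\ni y$ with $U\times V$ disjoint from $\le$, i.e. such that $y'<x'$ for all $\langle x',y'\rangle\in U\times V$. The natural candidates for $U,V$ are the open order-rays $(\leftarrow,a)$ and $(a,\to)$, so the first thing I would establish is that every such ray is open in the topology of $X$.

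This is where I expect the main (though mild) obstacle to lie: the definition only supplies a base of \emph{convex} open sets and does not postulate outright that the rays are open, so their openness has to be extracted from the Hausdorff axiom. Given $x<a$, Hausdorffness provides disjoint open sets around $x$ and $a$; shrinking the one around $x$ to a convex open set $C$ from the base, I get $x\in C$ with $a\notin C$. Convexity then forces $C\subseteq(\leftarrow,a)$: if some $b\in C$ had $b\ge a$, then from $x<a\le b$ and $x,b\in C$ convexity would put $a\in C$, a contradiction. Hence $(\leftarrow,a)$ is a union of basic convex open sets, so open; the argument for $(a,\to)$ is symmetric.

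With the rays available as open sets, I would finish by a case distinction on the pair $y<x$. If there is a point $z$ with $y<z<x$, I take $U=(z,\to)\ni x$ and $V=(\leftarrow,z)\ni y$; then every $\langle x',y'\rangle\in U\times V$ satisfies $y'<z<x'$, hence $y'<x'$. If instead $\langle y,x\rangle$ is a \emph{jump} (no point strictly between), I take $U=(y,\to)\ni x$ and $V=(\leftarrow,x)\ni y$; for $\langle x',y'\rangle\in U\times V$ one has $y<x'$ and $y'<x$, and were $x'\le y'$ we would get $y<x'\le y'<x$, producing the point $x'$ strictly between $y$ and $x$ and contradicting the jump. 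In either case $U\times V$ avoids $\le$, so the complement of $\le$ is open and $\le$ is closed in $X\times X$.
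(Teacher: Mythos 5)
Your proof is correct, but it takes a genuinely different route from the paper's. The paper separates the two points $y<x$ directly: by Hausdorffness and the convex base it finds \emph{disjoint order-convex} open neighborhoods $O_x\ni x$ and $O_y\ni y$, and then shows in one stroke that $O_x\times O_y$ misses $\le$ — if $x'\in O_x$, $y'\in O_y$ and $x'\le y'$, then disjointness plus convexity force $x'<y$ and $x<y'$, so $x'<y<x<y'$, whence $y\in[x',x]\subset O_x$ by convexity again, contradicting $O_x\cap O_y=\emptyset$. No case distinction and no auxiliary lemma are needed there. You instead first extract from the same Hausdorff-plus-convex-base hypothesis the intermediate fact that the open rays $(\leftarrow,a)$ and $(a,\to)$ are open, i.e.\ that the GO-topology refines the order topology of $\le$, and then run the classical separation argument for order topologies, with the standard dichotomy between a pair admitting an intermediate point $z$ and a jump. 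Your route is a bit longer, but the intermediate fact is reusable and clarifying: openness of the rays is exactly what reconciles the paper's definition of a GO-space with Lutzer's, and it yields immediately that each set $(\leftarrow,a]$ is closed — a consequence the paper instead has to re-derive from the closedness of $\le$ at the start of the proof of Lemma~\ref{l:L1}. The paper's argument, by contrast, is shorter, self-contained, and avoids the jump/no-jump case analysis entirely.
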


\begin{proof} Given two elements $x,y\in X$ with $x\not\le y$, use the Hausdorff property of $X$ and find two disjoint order-convex neighborhoods $O_x,O_y\subset X$ of the points $x,y$, respectively. We claim that the product $O_x\times O_y$ is disjoint with the linear order $\le$. Assuming that this is not true, find elements $x'\in O_x$ and $y'\in O_y$ such that $x'\le y'$. Taking into account that the sets  $O_x,O_y$ are disjoint and order-convex, we conclude that $x'<y$ and $x<y'$. It follows from $x\not\le y$ that $y<x$. Then $x'<y<x<y'$, which contradicts the assumption. This contradiction shows that the neighborhood $O_x\times O_y$ of the pair $\langle x,y\rangle$ is disjoint with $\le$ and hence $\le$ is a closed subset of $X\times X$.
\end{proof}

\begin{lemma}\label{l:L1} Any usco multimap  $\Phi:X\multimap Y$  from an ($F_\sigma$-perfect) topological space $X$ to a countably cellular GO-space space $Y$ has a Borel ($F_\sigma$-measurable) selection.
\end{lemma}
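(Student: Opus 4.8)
The plan is to take the selection to be the pointwise minimum $f(x):=\min\Phi(x)$. First I would check that this is well defined. By Lemma~\ref{l:GO} the order $\le$ is closed in $Y\times Y$, so for every $a\in Y$ the rays $(\leftarrow,a]=\{y:y\le a\}$ and $[a,\to)=\{y:a\le y\}$ are closed in $Y$. Given $x\in X$, the family $\{\Phi(x)\cap(\leftarrow,a]:a\in\Phi(x)\}$ consists of closed subsets of the compact set $\Phi(x)$ and has the finite intersection property (the minimum of finitely many points of $\Phi(x)$ lies in each member), so its intersection is non-empty and any of its elements is the required minimum. Two consequences will be used repeatedly: since $\Phi$ is upper semicontinuous and $(\leftarrow,a]$ is closed, the set $\{x:f(x)\le a\}=\Phi^{-1}[(\leftarrow,a]]$ is closed in $X$, and hence $\{x:f(x)>a\}$ is open in $X$.

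Next I would reduce the measurability of $f$ to preimages of order-convex open sets. Because $Y$ is countably cellular, every open $U\subseteq Y$ is the union of its at most countably many maximal order-convex open subsets, so it suffices to show that $f^{-1}[C]$ is Borel (resp.\ $F_\sigma$) for each order-convex open $C$. Such a $C$ is the intersection $C=C^{\uparrow}\cap C^{\downarrow}$ of its upward and downward closures, and one checks that $C^{\uparrow}$ is an open final ray (of the form $Y$, $(s,\to)$ or $[s,\to)$) and $C^{\downarrow}$ an open initial ray (of the form $Y$, $(\leftarrow,t)$ or $(\leftarrow,t]$). For the strict endpoints the preimages $\{x:f(x)>s\}$ and $\{x:f(x)<t\}$ are, respectively, open and (by the next paragraph) $F_\sigma$; for a non-strict endpoint whose ray is nevertheless open one invokes the observation that, e.g., $[s,\to)$ open forces $(\leftarrow,s)$ to be closed, whence $\{x:f(x)<s\}=\Phi^{-1}[(\leftarrow,s)]$ is closed and $\{x:f(x)\ge s\}$ is open. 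Thus every ray-preimage is Borel, and is $F_\sigma$ once $X$ is $F_\sigma$-perfect (the hypothesis is needed exactly to turn the open preimages into $F_\sigma$-sets); finite intersections, followed by the countable union over the convex components, then yield the claim.

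The heart of the argument, and the step I expect to be the main obstacle, is to prove that $\{x:f(x)<a\}=\Phi^{-1}[(\leftarrow,a)]=\bigcup_{b<a}\{x:f(x)\le b\}$ is $F_\sigma$. This is an increasing, inclusion-ordered chain of closed sets indexed by $b<a$, and it suffices to show the chain has countable cofinality, for then the union reduces to a countable one. If it did not, a transfinite recursion using the failure of cofinality at each countable stage would produce points $b_\xi<a$ and witnesses $x_\xi$ with $f(x_\xi)\le b_\xi$ and $f(x_\xi)>b_\eta$ for all $\eta<\xi$ (with $\xi<\w_1$); the values $v_\xi:=f(x_\xi)$ would then form a strictly increasing $\w_1$-sequence in $Y$, since $v_{\xi'}>b_\xi\ge v_\xi$ whenever $\xi<\xi'$. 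It remains to see that a countably cellular GO-space admits no such sequence: if uncountably many of the intervals $(v_\xi,v_{\xi+1})$ were non-empty they would constitute an uncountable disjoint family of open sets, while otherwise all but countably many consecutive $v_\xi$ are adjacent, so on a tail each singleton $\{v_\xi\}=(v_{\xi-1},v_{\xi+1})$ is open and the $\{v_\xi\}$ form an uncountable disjoint family of open sets — either way contradicting countable cellularity. This collapses the union to a countable one and completes the proof.
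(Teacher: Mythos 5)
Your proof is correct and follows essentially the same route as the paper's: the minimum selection $f(x)=\min\Phi(x)$ (well-defined since Lemma~\ref{l:GO} makes the rays $(\leftarrow,a]$ closed), the decomposition of an open set into its at most countably many maximal convex open pieces via countable cellularity, the splitting $C=C^{\uparrow}\cap C^{\downarrow}$, openness of preimages of upper open sets via upper semicontinuity, and $F_\sigma$-ness of preimages of lower sets via countable cofinality — where you in fact supply the proof (the strictly increasing $\w_1$-sequence versus cellularity argument) of the cofinality claim that the paper merely asserts. The one inaccuracy is your assertion that $C^{\uparrow}$ must have the form $Y$, $(s,\to)$ or $[s,\to)$: in a non-Dedekind-complete GO-space (e.g.\ $Y=\IQ$) a final segment may have no infimum, but such a segment is a union of open rays $(y,\to)$, so its preimage is open by your same observation, and the dual lower-segment-without-supremum case is covered verbatim by your cofinality argument, so this does not affect correctness.
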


\begin{proof} Being a GO-space, $Y$ has a base of the topology consisting of open order-convex subsets with respect to some linear order $\le$ on $Y$. By Lemma~\ref{l:GO},  the linear order $\le$ is a closed subset of $Y\times Y$. Then for every $a\in Y$ the order-convex set $(\leftarrow, a]=\{y\in Y:y\le a\}$ is closed in $Y$, which implies that each non-empty compact subset of $Y$ has the smallest element.

Then for any usco multmap $\Phi:X\multimap Y$ we can define a selection $f:X\to Y$ of $\Phi$ assigning to each point $x\in X$ the smallest element $f(x)$ of the non-empty compact set $\Phi(x)\subset Y$. We claim that this selection is $F_\sigma$-measurable. 

A subset $U\subset Y$ is called {\em upper} if for any $u\in U$ the order-convex set $[u,\to)=\{y\in Y:u\le y\}$ is contained in $U$.

\begin{claim}\label{cl:l1} For any upper open set $U\subset Y$ the preimage $f^{-1}[U]$ is open in $X$.
\end{claim}

\begin{proof} For any $x\in f^{-1}[U]$ we get $\Phi(x)\subset [f(x),\to)\subset U$. The upper semicontinuity of $\Phi$ yields a neighborhood $O_x\subset X$ such that $\Phi[O_x]\subset U$. Consequently, $f(O_x)\subset \Phi[O_x]\subset U$, witnessing that the set $f^{-1}[U]$ is open in $X$. 
\end{proof}

A subset $L\subset Y$ is {\em lower} if for every $a\in L$ the order-convex set $(\leftarrow,a]=\{y\in Y:y\le a\}$ is contained in $L$.

\begin{claim}\label{cl:l2} For any closed lower set $L\subset Y$ the preimage $f^{-1}[L]$ is closed in $X$.
\end{claim}

\begin{proof} Observe the the complement $X\setminus L$ is an open upper set in $Y$. By Claim~\ref{cl:l1}, the preimage $f^{-1}[X\setminus L]$ is open in $X$ and hence its complement $X\setminus f^{-1}[X\setminus L]=f^{-1}[L]$ is closed in $X$.
\end{proof}

\begin{claim}\label{cl:l3} For any lower set $L\subset Y$ the preimage $f^{-1}(L)$ is of type $F_\sigma$ in $X$.
\end{claim}

\begin{proof} If $L$ has the largest element $\lambda$, then $L=(\leftarrow,\lambda]$ and $f^{-1}[L]=f^{-1}[(\leftarrow,\lambda]]$ is closed by Claim~\ref{cl:l2}. So, we assume that $L$ does not have the largest element. Then the countable cellularity of $Y$ implies that $L$ has a countable cofinal subset $C\subset L$ (which means that for every $x\in L$ there exists $y\in C$ with $x\le y$). By Lemma~\ref{cl:l2}, for every $c\in C$ the preimage $f^{-1}[(\leftarrow,c]]$ is closed in $X$. Since $L=\bigcup_{c\in C}(\leftarrow, c]$, the preimage $f^{-1}[L]=\bigcup_{c\in C}f^{-1}[(\leftarrow,c]]$ is of type $F_\sigma$ in $X$.
\end{proof} 

\begin{claim}\label{cl:l4} For any open order-convex subset $U\subset Y$ the preimage $f^{-1}[U]$ is a Borel subset of $X$ (of type $F_\sigma$ if the space $X$ is $F_\sigma$-perfect). 
\end{claim}

\begin{proof} The order-convexity of $U$ implies that $U=\overleftarrow U\cap \overrightarrow U$ where $\overleftarrow U=\bigcup_{u\in U}(\leftarrow,u]$ and $\overrightarrow U=\bigcup_{u\in U}[u,\to)$. Taking into account that $Y$ has a base of order-convex sets, one can show that the upper set $\overrightarrow U$ is open in $X$. By Claim~\ref{cl:l1}, the preimage $f^{-1}[\overrightarrow U]$ is open in $X$
(of type $F_\sigma$ if the space $X$ is $F_\sigma$-perfect). By Claim~\ref{cl:l3}, the preimage $f^{-1}[\overleftarrow U]$ is of type $F_\sigma$ in $X$. Then $f^{-1}(U)=f^{-1}[\overleftarrow U]\cap f^{-1}[\overrightarrow U]$ is Borel (of type $F_\sigma$ if $X$ is $F_\sigma$-perfect).
\end{proof}

\begin{claim}\label{cl:l5} For every open set $U\subset Y$ the preimage $f^{-1}[B]$ is Borel subset of $X$ (of type $F_\sigma$ if $X$ is $F_\sigma$-perfect).
\end{claim}

\begin{proof}  By the definition of the topology of $Y$, each point $x\in U$ has an open order-convex neighborhood $O_x\subset U$. By the Kuratowski-Zorn Lemma, each open order-convex subset of $U$ is contained in a maximal open order convex subset of $U$. Let $\mathcal C\subset\mathcal B$ be the family of maximal open order-convex subsets of $U$. Observe that $U=\bigcup\mathcal C$ and any distinct sets $C,D\in\mathcal C$ are disjoint: otherwise the union $C\cup D$ would be an open order convex subset of $U$ and by the maximality of $C$ and $D$, $C=C\cup D=D$. Since the space $Y$ is countably cellular, the family $\mathcal C$ is at most countable. By Claim~\ref{cl:l4}, for every $C\in\mathcal C$ the preimage $f^{-1}(C)$ is Borel (an type $F_\sigma$-set if $X$ is $F_\sigma$-perfect) and so is  the countable union $f^{-1}[U]=\bigcup_{C\in\mathcal C}f^{-1}$.
\end{proof}

Claim~\ref{cl:l5} completes the proof of Lemma~\ref{l:L1}.
\end{proof}

\begin{lemma}\label{l:L2} Every usco multimap $\Phi:X\multimap Y$ from a countably cellular ($F_\sigma$-perfect) topological space $X$ into a GO-space $Y$ has a Borel ($F_\sigma$-measurable) selection.
\end{lemma}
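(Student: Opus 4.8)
The plan is to work with the two extremal selections and then repair them using the countable cellularity of $X$. Embedding the GO-space $Y$ into its order completion, I may assume that $Y$ is a compact linearly ordered space; a selection of $\Phi$ into this larger space automatically takes values in $Y$, since $\Phi(x)\subseteq Y$, so nothing is lost. By Lemma~\ref{l:GO} the order $\le$ is closed, so each compact set $\Phi(x)$ has a least element $f(x)=\min\Phi(x)$ and a largest element $h(x)=\max\Phi(x)$. Exactly as in Claim~\ref{cl:l1}, the map $f$ is lower semicontinuous, because $\{f>a\}=\{x:\Phi(x)\subseteq(a,\to)\}$ is open; dually $h$ is upper semicontinuous, because $\{h<a\}=\{x:\Phi(x)\subseteq(\leftarrow,a)\}$ is open. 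Consequently, for every $c\in Y$ the \emph{fibre} $C_c:=\{x\in X:c\in\Phi(x)\}=\{x:f(x)\le c\}\cap\{x:h(x)\ge c\}$ is closed in $X$, and $\bigcup_{c\in Y}C_c=X$. Producing a Borel (resp.\ $F_\sigma$-measurable) selection of $\Phi$ is therefore the same as choosing, in a Borel (resp.\ $F_\sigma$-measurable) way, for each $x$ an index $c(x)$ with $x\in C_{c(x)}$.

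The obstruction, and the reason this is harder than Lemma~\ref{l:L1}, is that $Y$ need not be countably cellular, so the least-element selection $f$ is in general \emph{not} Borel. Indeed, a lower open set $(\leftarrow,a)$ pulls back to $\{f<a\}=\bigcup_{b<a}\{f\le b\}$, an increasing union of the closed sets $\{f\le b\}$ indexed by the (possibly uncountable, and of uncountable cofinality) set $(\leftarrow,a)$, and such an uncountable increasing union of closed sets can fail to be Borel. The symmetric defect afflicts $h$ on upper open sets. Thus neither extremal selection works, and the countable cellularity of the \emph{target} exploited in Claims~\ref{cl:l3} and \ref{cl:l5} is unavailable; it must be replaced by the countable cellularity of the \emph{source} $X$.

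The key tool I would use is the following increment principle: if $(U_t)_{t\in T}$ is a family of open subsets of the countably cellular space $X$ that is increasing along a linear order $T$, then, writing $U_{<t}=\bigcup_{s<t}U_s$, the open sets $W_t:=U_t\setminus\overline{U_{<t}}$ are pairwise disjoint, whence only countably many of them are non-empty. Applied to the monotone families $\{f>a\}$ and $\{h<a\}$, this singles out a countable set $S\subseteq Y$ of \emph{essential levels}. I then use $S$ to cut $X$ into countably many Borel pieces and, on each piece across which no essential level intervenes, I select a single value lying in the common part of the relevant fibres $C_c$ (in the situation where no level is essential this is literally a global constant selection), assembling the pieces into one Borel selection; when $X$ is in addition $F_\sigma$-perfect, every open and every closed building block, and hence every countable Boolean combination used, is of type $F_\sigma$, which upgrades the conclusion to $F_\sigma$-measurability. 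The main obstacle I expect is the proof that countably many essential levels genuinely suffice: this forces a transfinite peeling of $X$ by the relatively clopen sets $\mathrm{int}(C_c)$, and the delicate point is that the successive remainders are only locally closed, so countable cellularity does not immediately bound the length of the recursion — showing that the construction closes off after countably many steps, equivalently that a Borel selection still exists on the final ``nowhere locally constant'' kernel, is the heart of the matter.
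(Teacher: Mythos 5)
Your proposal contains a genuine gap, and you name it yourself in the final paragraph: the claim that countably many ``essential levels'' suffice is exactly what is never proved. Your increment principle is correct as far as it goes (the sets $W_t=U_t\setminus\overline{U_{<t}}$ are pairwise disjoint open sets, so only countably many are non-empty), but emptiness of $W_t$ only means $U_t\subseteq\overline{U_{<t}}$; at the uncountably many ``non-essential'' levels the open sets $\{f>a\}$ can still grow strictly inside the closure of the earlier union. So the countable set $S$ does not control the family, the transfinite peeling by the sets $\mathrm{int}(C_c)$ need not close off after countably many steps, and no selection is produced on the residual kernel --- which is, by your own admission, ``the heart of the matter.'' What you have is a plan whose central step is missing. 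There is also a defect in your very first reduction: the topology of a GO-space is in general strictly finer than the topology induced from its order completion (consider the Sorgenfrey line, or an uncountable discrete GO-space), so a selection that is Borel-measurable as a map into the completion need not be Borel-measurable as a map into $Y$. Note moreover that no completion is needed for $\min\Phi(x)$ and $\max\Phi(x)$ to exist: compactness of $\Phi(x)$ together with closedness of $\le$ (Lemma~\ref{l:GO}) already gives both.

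The missing idea is much shorter than what you attempt. The paper does not try to repair the extremal selection on $X$ directly; it uses the Kuratowski--Zorn Lemma to pass to a \emph{minimal} usco submap $\Psi\subseteq\Phi$, and minimality is the device that transfers countable cellularity from the source to the target. Indeed, if $(U_\alpha)_{\alpha\in\omega_1}$ were an uncountable disjoint family of non-empty open subsets of $\Psi[X]$, then by minimality (Lemma 3.1.2 of \cite{Fab}) each $U_\alpha$ absorbs the image $\Psi[V_\alpha]$ of some non-empty open $V_\alpha\subseteq X$; the sets $V_\alpha$ are then pairwise disjoint, contradicting countable cellularity of $X$. Hence $\Psi[X]$ is a countably cellular GO-space, Lemma~\ref{l:L1} applies to $\Psi:X\multimap\Psi[X]$, and any selection of $\Psi$ is a selection of $\Phi$. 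If you want to salvage your approach, this is precisely what your ``nowhere locally constant kernel'' lacks: minimality converts disjointness of open sets in the target into disjointness of open sets in the source, which is the only place where countable cellularity of $X$ can be brought to bear.
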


\begin{proof} The Kuratowski-Zorn Lemma implies that the usco map $\Phi$ contains a minimal usco map $\Psi:X\multimap Y$. We claim that the image $\Psi[X]\subset Y$ is a countably cellular subspace of $Y$. Assuming the opposite, we can find an uncountable disjoint family $(U_\alpha)_{\alpha\in\w_1}$ of non-empty open subsets in $\Psi[X]$. For every $\alpha\in\w_1$, find $x_\alpha\in X$ such that $\Phi(x_\alpha)\cap U_\alpha\ne\emptyset$. By Lemma 3.1.2 \cite{Fab}, the minimality of the usco map $\Psi$ implies that $\Psi[V_\alpha]\subset U_\alpha$ for some non-empty open set $V_\alpha\subset X$. Taking into account that the family $(U_\alpha)_{\alpha\in\w_1}$ is disjoint, we conclude that the family $(V_\alpha)_{\alpha\in\w_1}$ is disjoint, witnessing that the space $X$ is not countably cellular. But this contradicts our assumption. This contradiction shows that the GO-subspace $\Psi[X]$ of $Y$ is countably cellular. By Lemma~\ref{l:L1}, the usco map $\Psi:X\to\Psi[X]$ has a Borel ($F_\sigma$-measurable) selection, which is also a selection of the usco map $\Phi$.
\end{proof}

Finally, let us prove one selection property of the split interval, which will be used  in the proof of Lemma~\ref{l:main}.

\begin{lemma}\label{l:I} Any selection of the multimap $p^{-1}:\II\to\ddot\II$ is $F_\sigma$-measurable.
\end{lemma}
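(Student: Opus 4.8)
The plan is to fix an arbitrary selection $f$ of $p^{-1}$ and to show that the preimage $f^{-1}[U]$ of every open set $U\subseteq\ddot\II$ is of type $F_\sigma$ in $\II$. I write $f(x)=\langle x,\e(x)\rangle$ for a completely arbitrary function $\e:\II\to\{0,1\}$; the only structural information I use is that $p\circ f=\mathrm{id}_{\II}$, so that $f(x)$ is one of the two consecutive points $\langle x,0\rangle,\langle x,1\rangle$ lying over $x$. Since $\ddot\II$ is separable, hence countably cellular, I may decompose $U$ into its maximal open order-convex subsets exactly as in the proof of Claim~\ref{cl:l5}: these form a pairwise disjoint family $\{V_n\}_{n\in\w}$, at most countable by countable cellularity, with $U=\bigcup_{n\in\w}V_n$. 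Because $f^{-1}[U]=\bigcup_{n\in\w}f^{-1}[V_n]$ and a countable union of $F_\sigma$-sets is $F_\sigma$, it suffices to treat a single open order-convex set $V$.

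The heart of the argument is the following computation for an open order-convex $V\subseteq\ddot\II$. The projection $p[V]$ is order-convex in $\II$, hence an interval; let $a=\inf p[V]$ and $b=\sup p[V]$. I claim that $(a,b)\subseteq f^{-1}[V]\subseteq[a,b]$. The right inclusion is clear, since $x\notin[a,b]$ forces $x\notin p[V]$ and hence $f(x)\notin V$. For the left inclusion, fix $x\in(a,b)$ and choose $u,v\in p[V]$ with $u<x<v$ (possible since $a$ and $b$ are the infimum and supremum); picking any points of $V$ over $u$ and $v$ and using that they sandwich both $\langle x,0\rangle$ and $\langle x,1\rangle$ in the order, the order-convexity of $V$ yields $\langle x,0\rangle,\langle x,1\rangle\in V$, so $f(x)\in V$ regardless of the value $\e(x)$. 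Consequently $f^{-1}[V]=(a,b)\cup T$ for some $T\subseteq\{a,b\}$, which is the union of an open interval with a finite (closed) set and is therefore $F_\sigma$ in $\II$.

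The decisive point, and the reason the statement is special to the $2$-to-$1$ map $p$, is that the dependence of $f^{-1}[V]$ on the arbitrary function $\e$ is confined to the two endpoints $a,b$: on the whole interior $(a,b)$ both sheets over each point lie in $V$, so the selection lands in $V$ no matter which sheet it chooses. I expect the only place needing genuine care is this endpoint bookkeeping, together with the verification that the argument composes correctly over the countably many components. In particular I will phrase the final assembly as a \emph{union} (not a difference) of the sets $f^{-1}[V_n]$, so as to avoid the trap that subtracting a countable set from an $F_\sigma$-set need not yield an $F_\sigma$-set (e.g. $\II\setminus\IQ$); taking unions of the $F_\sigma$-sets $f^{-1}[V_n]$ has no such issue and closes the proof.
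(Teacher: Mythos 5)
Your proof is correct, but it follows a genuinely different route from the paper's. The paper's proof is a covering argument: for every $x\in s^{-1}[U]$ it chooses an open order-convex set $I_x\subset U$ containing $s(x)$, invokes the hereditary Lindel\"of property of $\ddot\II$ to extract a countable subfamily with the same union, and then observes that the preimage under a selection of \emph{any} order-convex subset of $\ddot\II$ is a convex subset of $\II$ (the same lexicographic sandwich fact you use: for $u<x<v$ every point over $u$ lies below both points over $x$, which lie below every point over $v$), hence $F_\sigma$. You instead decompose $U$ canonically into its maximal open order-convex components, as in Claim~\ref{cl:l5}, getting countably many of them from countable cellularity alone --- a weaker property than hereditary Lindel\"ofness, which suffices here precisely because the components are pairwise disjoint --- and then you compute the preimage of each component exactly as $(a,b)\cup T$ with $T\subset\{a,b\}$. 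Your endpoint computation is sharper than the paper's convexity observation: it exhibits $f^{-1}[U]$ as the union of an open set with a countable set, so the arbitrariness of the selection is confined to the countably many component endpoints; this is stronger than $F_\sigma$-measurability. What the paper's version buys in exchange is flexibility: the pattern ``cover the preimage by order-convex (or box-shaped) pieces and extract a countable subfamily'' does not need a disjoint decomposition to exist, and it is exactly this template that reappears, in a harder form where countability must be forced by CH rather than by Lindel\"ofness, in the proof of Lemma~\ref{l:main}. Your closing remark about assembling the answer as a union rather than a difference is well taken, and all steps of your argument check out, including the two edge cases ($V$ with $a=b$, and the trivial inclusion $f^{-1}[V]\subset[a,b]$).
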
 

\begin{proof} Given any open subset $U\subset \ddot\II$, we need to show that $s^{-1}[U]$ is of type $F_\sigma$ in $\II$. For every $x\in s^{-1}[U]$, find an open order-convex set $I_x\subset U$ containing $s(x)$. It is well-known (see e.g. \cite[3.10.C(a)]{Eng}) that the split interval $\ddot\II$ is hereditarily Lindel\"of. Consequently, there exists a countable set $C\in s^{-1}[U]$ such that $\bigcup_{x\in s^{-1}[U]}I_x=\bigcup_{x\in C}I_x$ and hence $s^{-1}[U]=\bigcup_{x\in C}s^{-1}[I_x]$. For every $x\in C$ the order-convexity of the interval $I_x\subset \ddot\II$ implies that its preimage $s^{-1}[I_x]$ is a convex subset of $\II$, containing $x$. Since convex subsets of $\II$ are of type $F_\sigma$, the countable union $s^{-1}[U]=\bigcup_{x\in C}I_x$ is an $F_\sigma$-set in $\II$. 
\end{proof}

\section{Selection properties of the split square $\ddot\II^2$ under the negation of CH}

In this section we study the selection properties of the split square $\ddot\II^2$ under the negation of the Continuum Hypothesis.

By $\langle x,y\rangle$ we denote ordered pairs of elements $x,y$. In this way we distinguish ordered pairs from the order intervals $(x,y):=\{z\in x\le  z\le y\}$ in linearly ordered spaces.

The split interval $\ddot\II=\II\times\{0,1\}$ carries the lexicographic order defined by $\langle x,i\rangle\le\langle y,j\rangle$ iff either $x<y$ or ($x=y$ and $i<j$). It is well-known that the topology generated by the lexicographic order on $\ddot\II$ is compact and Hausdorff, see \cite[3.10.C(b)]{Eng}. By $p:\ddot\II\to\II$, $p:\langle x,i\rangle\mapsto x$, we denote the coordinate projection and by $P:\ddot\II^2\to\II^2$, $P:\langle x,y\rangle\mapsto \langle p(x),p(y)\rangle$ the square of the map $p$.

%The implications $(1)\Ra(3)\Ra(2)$ follow from Lemmas~\ref{l1} and \ref{l:main}, proved below; the implication $(2)\Ra(1)$ is trivial.

%Now we fix some notations, used in Lemmas~\ref{l1}--\ref{l:main}. The closed unit interval $[0,1]$ on the real line is denoted by $\II$. Ordered pairs of elements $x,y$ will be denoted by $\langle x,y\rangle$. On the other hand, an open interval with end-points $a,b$ in a linearly ordered space $X$ is denoted by $(a,b):=\{x\in X:a<x<b\}$. Besides open intervals we shall consider half-open intervals $[a,b)=\{x\in X:a\le x<b\}$, $(a,b]=\{x\in X:a<x\le b\}$ and the closed interval $[a,b]=\{x\in X:a\le x\le b\}$. 

The following lemma proves the implication $(1)\Ra(3)$ of Theorem~\ref{t:main}.

\begin{lemma}\label{l1} If $\w_1<\mathfrak c$, then the multimap $P^{-1}:\II^2\to \ddot\II^2$ has no Borel selections.
\end{lemma}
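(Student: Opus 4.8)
\section*{Proof proposal for Lemma~\ref{l1}}

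\textbf{Setup and strategy.}
Assume, towards a contradiction, that the usco multimap $P^{-1}:\II^2\multimap\ddot\II^2$ admits a Borel selection $s$; I will deduce $\w_1=\mathfrak c$, which contradicts $\w_1<\mathfrak c$. Since $P\circ s=\mathrm{id}_{\II^2}$, the selection must have the form $s(a,b)=\langle\langle a,\sigma(a,b)\rangle,\langle b,\tau(a,b)\rangle\rangle$ for two functions $\sigma,\tau:\II^2\to\{0,1\}$ recording the chosen ``sheets'' over each coordinate. Put $h:=\sigma+\tau\bmod 2:\II^2\to\{0,1\}$, so that $h(a,b)=1$ exactly when $s(a,b)$ sits over the ``$(1,0)$'' or ``$(0,1)$'' corner of $P^{-1}(a,b)$, and $h(a,b)=0$ for the ``$(0,0)$'' or ``$(1,1)$'' corner. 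The plan is to show that Borelness of $s$ forces $\{h=1\}$ to meet every line of slope $+1$ in a countable set and $\{h=0\}$ to meet every line of slope $-1$ in a countable set; after a $45^\circ$ rotation of coordinates this is precisely a Sierpi\'nski decomposition of a square into a part with countable vertical sections and a part with countable horizontal sections, and the classical theorem of Sierpi\'nski then yields $\w_1=\mathfrak c$.

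\textbf{The main step: wild open sets and discreteness of corner points.}
Fix $c$ and the increasing line $\Gamma=\{(t,t+c):t\in I\}\cap\II^2$, a closed subsegment of $\II^2$. For $a,b\in\II$ write $G_a=(\langle a,0\rangle,\to)$ and $H_b=(\leftarrow,\langle b,1\rangle)$, so $\langle a,1\rangle\in G_a\not\ni\langle a,0\rangle$ and $\langle b,0\rangle\in H_b\not\ni\langle b,1\rangle$; thus the ``corner point'' $\langle\langle a,1\rangle,\langle b,0\rangle\rangle$ lies in the open box $G_a\times H_b$. For any $A\subseteq I$ consider the open set $W_A=\bigcup_{t\in A}\bigl(G_t\times H_{t+c}\bigr)\subseteq\ddot\II^2$. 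A direct computation shows two things. First, the ``bulk'' of $W_A$ (the part with first coordinate genuinely above $t$ and second genuinely below $t+c$) is the open region $\{(a,b):A\cap(b-c,a)\neq\varnothing\}$, which is Borel and, since $b-c=a$ on $\Gamma$, is \emph{disjoint} from $\Gamma$. Second, for $t_0\in I$ the value $s(t_0,t_0+c)$ lies in $W_A$ if and only if $t_0\in A$ and $(\sigma,\tau)(t_0,t_0+c)=(1,0)$: the incompatible directions of approach (first coordinate from the right, second from the left) along an \emph{increasing} line kill all cross terms $t\neq t_0$, which is exactly the statement that the $(1,0)$-corner points over $\Gamma$ form a discrete subspace of $\ddot\II^2$. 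Consequently $s^{-1}[W_A]\cap\Gamma=\{(t,t+c):t\in A,\ (\sigma,\tau)(t,t+c)=(1,0)\}$. Borelness of $s$ makes $s^{-1}[W_A]$, hence its trace on the closed set $\Gamma$, Borel for \emph{every} $A$; taking $A$ to range over all subsets of the ``corner set'' $K=\{t:(\sigma,\tau)(t,t+c)=(1,0)\}$ shows that every subset of $K$ is Borel in $\Gamma\cong$ interval. As an uncountable Borel set contains a perfect set and a perfect set has $2^{\mathfrak c}>\mathfrak c$ subsets but only $\mathfrak c$ Borel subsets, $K$ must be countable. The analogous open set built from $G$-boxes in both factors handles the $(0,1)$ corner, so $\{h=1\}$ meets $\Gamma$ in a countable set. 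I regard this discreteness-plus-``every-subset-Borel'' argument as the main obstacle; everything else is bookkeeping.

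\textbf{Symmetry, rotation, and conclusion.}
For lines of slope $-1$, the order-reversing homeomorphism $\langle\langle a,i\rangle,\langle b,j\rangle\rangle\mapsto\langle\langle a,i\rangle,\langle 1-b,1-j\rangle\rangle$ of $\ddot\II^2$ (together with the matching reflection of the domain) carries a Borel selection to a Borel selection and interchanges increasing with decreasing lines and the pair of corners $\{(0,0),(1,1)\}$ with $\{(0,1),(1,0)\}$; so the previous step applied to the reflected selection shows that $\{h=0\}$ meets every line $x+y=c$ in a countable set. Now substitute $u=x+y$, $v=x-y$. Lines $x-y=c$ become the ``vertical'' lines $v=c$ and lines $x+y=c$ become the ``horizontal'' lines $u=c$, so in the $(u,v)$-picture the set $A:=\{h=1\}$ has all vertical sections countable and the set $B:=\{h=0\}$ has all horizontal sections countable, with $A\sqcup B$ covering a genuine coordinate square of size $\mathfrak c$ inside the rotated domain. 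By Sierpi\'nski's theorem, such a decomposition forces $\mathfrak c\le\w_1$: choosing $\w_1$ values of $v$, the countability of the vertical sections of $A$ confines the relevant part of $A$ to $\le\w_1$ values of $u$, and any $u^\ast$ outside this set produces an uncountable horizontal section of $B$ unless $\mathfrak c\le\w_1$. Thus $\w_1=\mathfrak c$, contradicting $\w_1<\mathfrak c$, and hence no Borel selection exists.
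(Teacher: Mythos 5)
Your proof is correct and takes essentially the same route as the paper's: the same decomposition of $\II^2$ according to which corner of $P^{-1}(a,b)$ the selection picks, the same observation that corner points of a given type are ``discrete'' along lines of the appropriate slope (so that Borel-measurability of $s$, via the fact that a set all of whose subsets are Borel must be countable, forces each such trace to be countable), and the same Sierpi\'nski-type counting argument to contradict $\w_1<\mathfrak c$. The only slip is notational: the $(0,1)$ corner requires the boxes $H_t\times G_{t+c}$ (the mirror image of the $(1,0)$ case, with the roles of the two rays swapped), not ``$G$-boxes in both factors'', since $G_t\times G_{t+c}$ does not contain the point $\langle\langle t,0\rangle,\langle t+c,1\rangle\rangle$ and its preimage is not discrete along increasing lines; the fix is immediate from the symmetry you already exploit.
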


\begin{proof} To derive a contradiction, assume that the multimap $P^{-1}$ has a Borel-measurable selection $s:\II^2\to \ddot\II^2$.

For a real number $x\in\II$ by $x_0$ and $x_1$ we denote the points $\langle x,0\rangle$ and $\langle x,1\rangle$ of the split interval $\ddot\II$. Then $\ddot\II=\ddot\II_0\cup \ddot\II_1$ where $\ddot\II_i=\{x_i:x\in\II\}$ for $i\in\{0,1\}$.

For any numbers $i,j\in\{0,1\}$ consider the set $$Z_{ij}=\{z\in \II^2:s(z)\in \ddot\II_i\times \ddot\II_j\}$$and observe that $\II^2=\bigcup_{i,j=0}^1Z_{ij}$.

For a point $a\in \ddot\II$, let $[0_0,a)$ and $(a,1_1]$ be the order intervals in $\ddot\II$ with respect to the lexicographic order. Observe that for any $x\in\II$ we have
$$p\big([0_0,x_0)\big)=[0,x),\;\;p\big([0_0,x_1)\big)=[0,x],\;\;p\big((x_0,1_1]\big)=[x,1],\;\;p\big((x_1,1_1]\big)=(x,1].$$

For every $a\in(0,2)\subset\IR$ consider the lines $$\mathrm{L}_a=\{\langle x,y\rangle\in\IR^2:x+y=a\}\mbox{ \ and \ }\Gamma^a=\{\langle x,y\rangle\in\IR^2:y-x=a\}$$ on the plane. 

\begin{claim}\label{cl1} For every $a\in \IR$ the intersection ${\mathrm L}_a\cap Z_{00}$ is at most countable.
\end{claim}

\begin{proof} If for some $a\in\IR$ the intersection ${\mathrm L}_a\cap Z_{00}$ is uncountable, then we can choose a non-Borel subset $B\subset \mathrm L_a\cap Z_{00}$ of cardinality $|B|=\w_1$. For every point $\langle x,y\rangle\in B\subset Z_{00}$, the definition of the set $Z_{00}$ ensures that $s(\langle x,y\rangle)=\langle x_0,y_0\rangle$ and hence the set $U_{\langle x,y\rangle}=[0_0, x_1)\times[0_0, y_1)=[0_0,x_0]\times[0_0,y_0]$ is an open neighborhood of $s(\langle x,y\rangle)$ in $\ddot\II^2$. Observe that $\langle x,y\rangle\in s^{-1}(U_{\langle x,y\rangle})\subset p(U_{\langle x,y\rangle})=[0,x]\times[0,y]$ and hence $\mathrm{L}_a\cap s^{-1}(U_{\langle x,y\rangle})=\{\langle x,y\rangle\}$. Then for the open set $U=\bigcup_{\langle x,y\rangle\in B}U_{\langle x,y\rangle}$ the preimage $s^{-1}[U]$ is not Borel in $\II^2$ because the intersection $s^{-1}[U]\cap\mathrm L_a=B$ is not Borel. But this contradicts the Borel measurability of $s$.
\end{proof}

By analogy we can prove the following claims.

\begin{claim}\label{cl2} For every $a\in \IR$ the intersection $\mathrm L_a\cap Z_{11}$ is at most countable.
\end{claim}

\begin{claim}\label{cl3} For every $b\in \IR$ the intersection $\Gamma^b\cap (Z_{01}\cup Z_{10})$ is at most countable.
\end{claim}

Now fix any subset set $\Omega\subset [\frac12,\frac32]$ of cardinality $|\Omega|=\w_1$. By Claims~\ref{cl1}, \ref{cl2}, for every $a\in \Omega$ the intersection $\mathrm{L}_a\cap (Z_{00}\cup Z_{11})$ is at most countable. Consequently the union $$U=\bigcup_{a\in\Omega}\mathrm{L}_a\cap(Z_{00}\cup Z_{11})$$ has cardinality $|U|\le\w_1$. Since $|U|\le\w_1<\mathfrak c$, there exists a real number $b\in [\frac12,\frac32]$ such that the line $\Gamma^b$ does not intersect the set $U$. Since $\{b\}\cup\Omega\subset [\frac12,\frac32]$ for every $a\in\Omega$ the intersection $\Gamma^b\cap \mathrm{L}_a\cap\II^2$ is not empty. Then the set $X=\bigcup_{a\in\Omega}\mathrm{L}_a\cap\Gamma^b\subset\II^2$ is uncountable and $X\subset \Gamma^b\setminus U\subset\Gamma^b\cap (Z_{01}\cup Z_{10})$. But this contradicts Claim~\ref{cl3}.
\end{proof}

\section{Selection properties of the split square $\ddot\II^2$ under the Continuum Hypothesis}\label{s:proofs}

In this section we shall prove that under the continuum hypothesis the usco multimap $P^{-1}:\II^2\to\ddot\II^2$ has an $F_\sigma$-measurable selection.

First we introduce some terminology related to monotone functions.

A subset $f\subset \II^2$ is called a 
\begin{enumerate}
\item a {\em function} if for any $\langle x_1,y_1\rangle,\langle x_2,y_2\rangle\in f$ the equality $x_1=x_2$ implies $y_1=y_2$;
\item {\em strictly increasing} if for any $\langle x_1,y_1\rangle,\langle x_2,y_2\rangle\in f$ the strict inequality $x_1<x_2$ implies $y_1<y_2$;
\item {\em strictly decreasing} if for any $\langle x_1,y_1\rangle,\langle x_2,y_2\rangle\in f$ the inequality $x_1< x_2$ implies $y_1>y_2$;
\item {\em strictly monotone} if $f$ is strictly increasing or strictly decreasing. 
\end{enumerate}

\begin{lemma}\label{l:i} Each strictly increasing function $f\subset\II^2$ is a subset of a Borel strictly increasing function $\bar f\subset \II^2$.
\end{lemma}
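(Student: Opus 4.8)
The plan is to enlarge the domain of $f$ from the (possibly non-Borel) projection $D:=\{x\in\II:\exists y,\ \langle x,y\rangle\in f\}$ to a \emph{Borel} set on which $f$ still extends to a strictly increasing function, and then to invoke the elementary fact that a monotone function defined on a Borel subset of $\II$ automatically has a Borel graph. Viewing $f$ as the graph of a strictly increasing partial function $x\mapsto f(x)$, $D\to\II$, the natural candidate for the enlarged domain is the closure $\overline D$, on which one tries to define the extension $\bar f$ by the one-sided envelopes $\ell(x):=\sup\{f(t):t\in D,\ t<x\}$ and $r(x):=\inf\{f(t):t\in D,\ t>x\}$, which satisfy $\ell(x)\le r(x)$ for every $x$, and $\ell(x)\le f(x)\le r(x)$ for $x\in D$.

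First I would extend $f$ to the two-sided accumulation points of $D$: at such a point $x\in\overline D\setminus D$ the supremum $\ell(x)$ and the infimum $r(x)$ are never attained, so setting $\bar f(x):=\ell(x)$ when $\ell(x)=r(x)$ and $\bar f(x):=\frac12(\ell(x)+r(x))$ when $\ell(x)<r(x)$ yields $f(t)<\bar f(x)<f(t')$ for all domain points $t<x<t'$; since between any two distinct two-sided accumulation points there always lie points of $D$, the extended $\bar f$ remains strictly increasing. The remaining points of $\overline D\setminus D$ are one-sided accumulation points, and each of them is an endpoint of one of the open intervals composing the open set $\II\setminus\overline D$; as there are at most countably many such intervals, I would simply discard these countably many points. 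Thus $\bar f$ becomes a strictly increasing function whose domain $\tilde D$ equals $\overline D$ with an at most countable set removed, hence a Borel subset of $\II$, and $f\subseteq\bar f$ by construction.

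It then remains to verify that $\bar f$ has a Borel graph, and here I would isolate the sub-fact that any monotone function $g$ defined on a Borel set $C\subseteq\II$ has a Borel graph. Indeed, the formula $G(x):=\sup\big(\{0\}\cup\{g(t):t\in C,\ t\le x\}\big)$ defines a non-decreasing, hence Borel, function on all of $\II$ with $G|_C=g$, so $\mathrm{graph}(g)=\mathrm{graph}(G)\cap(C\times\II)$ is Borel. Applying this to the strictly increasing (so monotone) function $\bar f$ on the Borel set $\tilde D$ completes the argument.

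The main obstacle is the presence of ``pinch'' points: a one-sided accumulation point $x$ at which $\ell(x)=r(x)$ with the common value attained by an isolated domain point on the far side of the adjacent gap. For instance, with $D=\{0\}\cup(\frac12,1]$, $f(0)=\frac12$, and $f=\mathrm{id}$ on $(\frac12,1]$, the point $x=\frac12$ forces any admissible value to be $\frac12=f(0)$, destroying strict monotonicity; such points genuinely cannot be added to the domain. This is exactly why the extension is taken on $\overline D$ minus a countable set rather than on all of $\overline D$, and the observation that rescues the proof is that these obstructive points, being endpoints of the countably many complementary intervals of $\overline D$, are at most countable and so may be deleted while staying within the realm of Borel sets.
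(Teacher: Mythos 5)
Your proof is correct, but it takes a genuinely different route from the paper's. You build the Borel extension by hand: extend $f$ by its one-sided envelopes $\ell,r$ to the two-sided accumulation points of the domain $D=\pr_1[f]$, discard the at most countably many one-sided accumulation points in $\overline D\setminus D$ --- which, as your pinch-point example shows, can genuinely obstruct strict monotonicity --- and then invoke the elementary fact that a monotone function on a Borel subset of $\II$ has a Borel graph (by restricting a non-decreasing function defined on all of $\II$). The paper instead deletes a countable set $D_f\subset f$ so that $f\setminus D_f$ becomes an increasing homeomorphism between subsets of $\II$ (using that a monotone function and its inverse have only countably many discontinuity points), extends this homeomorphism via the Lavrentiev theorem to a homeomorphism $h$ between $G_\delta$-subsets of $\II^2$, and returns $\bar f=(h\setminus(\pr_1[D_f]\times\pr_2[D_f]))\cup D_f$. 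Your route is more elementary and self-contained: it needs no descriptive-set-theoretic machinery beyond ``monotone implies Borel,'' and it yields an extension with a very concrete domain (a closed set minus a countable set), whereas the paper's argument is shorter on the page only because Lavrentiev's theorem is quoted; the order-theoretic structure of $\IR$, which your envelopes exploit directly, is what makes the elementary approach available here. One tiny repair to your step discarding one-sided accumulation points: such a point need not be an endpoint of a component of $\II\setminus\overline D$ when it is $0$ or $1$ (for example $D=(0,1]$ gives $\overline D=\II$, so $\II\setminus\overline D=\emptyset$, yet $0$ is a one-sided accumulation point that must be discarded), but this adds at most two further points to the discarded set, so countability --- and hence Borelness of the final domain --- is unaffected.
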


\begin{proof} It follows that the strictly increasing function $f$ is a strictly increasing bijective function between the sets $\pr_1[f]=\{x\in\II:\exists y\in\II\; \;\langle x,y\rangle\in f\}$ and $\pr_2[f]=\{y\in \II:\exists x\in\II\;\;\langle x,y\rangle\in f\}$. 
It is well-known that monotone functions of one real variable have at most countably many discontinuity points. Consequently, the sets of discontinuity points of  the strictly monotone functions $f$ and $f^{-1}$ are at most countable. This allows us to find a countable set $D_f\subset f$ such that the set $f\setminus D_f$ coincides with the graph of some increasing homeomorphism between subsets of $\II$. Replacing $D_f$ by a larger countable set, we can assume that $D_f=f\cap(\pr_1[D_f]\times\pr_2[D_f])$, where $\pr_1,\pr_2:\II^2\to\II$ are coordinate projections. By the Lavrentiev Theorem \cite[3.9]{Kechris}, the homeomorphism $f\setminus D_f$ extends to a (strictly increasing) homeomorphism $h\subset \II^2$ between $G_\delta$-subsets of $\II^2$ such that $f\setminus D_f$ is dense in $h$. It is easy to check that the Borel subset $\bar f=(h\setminus (\pr_1[D_f]\times\pr_2[D_f])\cup D_f$ is a strictly increasing function extending $f$.
\end{proof}

By analogy we can prove

\begin{lemma}\label{l:d} Each strictly decreasing function $f\subset\II^2$ is a subset of a Borel strictly decreasing function $\bar f\subset \II^2$.
\end{lemma}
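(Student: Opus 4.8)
The plan is to mirror the proof of Lemma~\ref{l:i} almost verbatim, substituting ``decreasing'' for ``increasing'' throughout and keeping a careful eye on the one place where the orientation genuinely matters. A strictly decreasing function $f\subset\II^2$ is a strictly decreasing bijection between $\pr_1[f]$ and $\pr_2[f]$; composing with the orientation-reversing homeomorphism $r:\II\to\II$, $r(t)=1-t$, turns $f$ into the strictly \emph{increasing} function $g:=\{\langle x,1-y\rangle:\langle x,y\rangle\in f\}$. So the cleanest route is to apply Lemma~\ref{l:i} to $g$ as a black box: it yields a Borel strictly increasing $\bar g\subset\II^2$ with $g\subset\bar g$, and then $\bar f:=\{\langle x,1-y\rangle:\langle x,y\rangle\in\bar g\}$ is a Borel strictly decreasing function containing $f$, since $r$ is a Borel (indeed homeomorphic) involution on the second coordinate and reverses order exactly once.

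First I would record that $r\times r$ (or rather $\mathrm{id}\times r$) is a homeomorphism of $\II^2$, so it carries Borel sets to Borel sets and preserves the ``function'' property in the first coordinate. Second, I would check the monotonicity bookkeeping: if $\langle x_1,y_1\rangle,\langle x_2,y_2\rangle\in f$ with $x_1<x_2$ then $y_1>y_2$, hence $1-y_1<1-y_2$, so $g$ is strictly increasing; conversely strict increase of $\bar g$ gives strict decrease of $\bar f$ under the same flip. Third, $f\subset\bar f$ is immediate from $g\subset\bar g$ because the flip is a bijection. This reduces the entire statement to Lemma~\ref{l:i}, which is the analogy the paper already invokes with the phrase ``By analogy we can prove.''

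If one prefers a self-contained argument that does not route through $r$, the alternative is to repeat the Lemma~\ref{l:i} proof directly: the countably many discontinuity points of the monotone bijections $f$ and $f^{-1}$ let one extract a countable $D_f\subset f$ so that $f\setminus D_f$ is the graph of a decreasing homeomorphism between subsets of $\II$; arrange $D_f=f\cap(\pr_1[D_f]\times\pr_2[D_f])$; apply the Lavrentiev Theorem to extend $f\setminus D_f$ to a strictly decreasing homeomorphism $h\subset\II^2$ between $G_\delta$-sets with $f\setminus D_f$ dense in $h$; and set $\bar f=(h\setminus(\pr_1[D_f]\times\pr_2[D_f]))\cup D_f$.

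The only genuine subtlety, and what I expect to be the main obstacle in either formulation, is verifying that the ``patched'' set $\bar f$ remains a strictly \emph{monotone function} across the seam between $h$ and $D_f$: one must confirm that removing the rectangle $\pr_1[D_f]\times\pr_2[D_f]$ from $h$ before re-adding the actual pairs $D_f$ prevents any clash of $x$-values or any order violation between a point of $h$ and a point of $D_f$. This is exactly the ``easy to check'' step asserted at the end of Lemma~\ref{l:i}; since decreasingness is just increasingness composed with one coordinate flip, no new difficulty arises here beyond that already implicit in the increasing case, which justifies the brevity of an ``by analogy'' proof.
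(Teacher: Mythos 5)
Your proposal is correct, and your primary route is in fact tighter than what the paper does: the paper gives no separate proof at all, relying on the phrase ``by analogy'' to indicate that the proof of Lemma~\ref{l:i} should be repeated with all inequalities in the second coordinate reversed. Your reduction via the flip $\langle x,y\rangle\mapsto\langle x,1-y\rangle$ replaces that informal analogy with a genuine black-box deduction: since $\mathrm{id}\times r$ is a homeomorphic involution of $\II^2$, it preserves Borelness and the property of being a function, and it exchanges strictly increasing with strictly decreasing sets, so Lemma~\ref{l:d} follows from Lemma~\ref{l:i} in two lines with nothing to re-verify --- in particular the Lavrentiev extension step and the delicate ``seam'' check for $\bar f=(h\setminus(\pr_1[D_f]\times\pr_2[D_f]))\cup D_f$ are done once, in the increasing case, rather than duplicated. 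What the paper's implicit approach buys is only self-containedness of the decreasing case; what your reduction buys is rigor at essentially zero cost, since every property involved (Borel, function, strict monotonicity, inclusion $f\subset\bar f$) transports through the flip exactly as you checked. Your secondary, self-contained variant is precisely the paper's intended ``analogous'' proof, so you have covered both readings; either one stands.
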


Now we are ready to prove the main result of this section.

\begin{lemma}\label{l:main} Under $\w_1=\mathfrak c$ the multifunction $P^{-1}:\II^2\multimap \ddot\II^2$ has an $F_\sigma$-measurable selection.
\end{lemma}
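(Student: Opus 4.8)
A selection $s:\II^2\to\ddot\II^2$ of $P^{-1}$ is the same as a choice, for each $z=\langle x,y\rangle$, of a point $\langle x_{i(z)},y_{j(z)}\rangle$ of the fibre $\{x_0,x_1\}\times\{y_0,y_1\}$, i.e. of two level functions $i,j:\II^2\to\{0,1\}$; equivalently of the partition $\II^2=\bigcup_{i,j}Z_{ij}$ with $Z_{ij}=\{z:s(z)\in\ddot\II_i\times\ddot\II_j\}$, where $\ddot\II_i=\{x_i:x\in\II\}$. First I would read off the constraints that $F_\sigma$-measurability imposes, by repeating the computation behind Claims~\ref{cl1}--\ref{cl3}. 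For a fixed anti-diagonal $\mathrm L_a=\{\langle x,y\rangle:x+y=a\}$ and any $S\subseteq Z_{00}\cap\mathrm L_a$, the union $U_S$ of the corner neighbourhoods $[0_0,x_1)\times[0_0,y_1)$ over $\langle x,y\rangle\in S$ is open and satisfies $s^{-1}[U_S]\cap\mathrm L_a=S$; as $s^{-1}[U_S]$ must be $F_\sigma$, every subset of $Z_{00}\cap\mathrm L_a$ is $F_\sigma$ in the line, and since $\w_1=\mathfrak c$ makes an uncountable $T\subseteq\mathrm L_a$ have $2^{\mathfrak c}>\mathfrak c$ subsets while only $\mathfrak c$ of them are $F_\sigma$, this forces $Z_{00}\cap\mathrm L_a$ to be countable. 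The same argument (with the NE, NW, SE corner neighbourhoods) makes $Z_{11}\cap\mathrm L_a$ countable for all $a$ and $Z_{01}\cap\Gamma^b$, $Z_{10}\cap\Gamma^b$ countable for all $b$, where $\Gamma^b=\{\langle x,y\rangle:y-x=b\}$. Putting $k:=i\oplus j$, these four conditions say precisely that $\{k=0\}=Z_{00}\cup Z_{11}$ is countable on every anti-diagonal and $\{k=1\}=Z_{01}\cup Z_{10}$ is countable on every diagonal; in the coordinates $\langle x+y,\,y-x\rangle$ this is a \emph{Sierpi\'nski decomposition} of the square, which exists exactly because $\w_1=\mathfrak c$. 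This is the precise point where CH is used, dual to its failure driving Lemma~\ref{l1}.

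Next I would construct such a decomposition, together with the finer data needed for measurability, by transfinite recursion of length $\w_1$ along a fixed well-order of $\II$. Besides the Sierpi\'nski conditions there are one-dimensional conditions coming from the subbasic half-spaces $(\leftarrow,c_1)\times\ddot\II$ and $(c_0,\to)\times\ddot\II$ (and their transposes): these require that on each vertical line $x=c$ \emph{both} level sets $\{y:i(\langle c,y\rangle)=0\}$ and $\{y:i(\langle c,y\rangle)=1\}$ be $F_\sigma$ (hence $\Delta^0_2$) in the line, and symmetrically for $j$ along horizontal lines. To secure both kinds of conditions at once I would build $\{k=0\}$ from graphs of strictly increasing functions (each meeting every anti-diagonal in at most one point) and $\{k=1\}$ from graphs of strictly decreasing functions (each meeting every diagonal in at most one point), arranging along the well-order that every vertical and every horizontal line meets only countably many of the graphs. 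Lemmas~\ref{l:i} and \ref{l:d} are used at each step to replace the partial monotone function being added by a \emph{Borel} strictly monotone extension, which makes the straight sections of the level sets $\Delta^0_2$; the levels $i,j$ are then fixed on each graph so that the vertical sections of $\{i=0\}$ and the horizontal sections of $\{j=0\}$ become intervals up to countable sets, as required.

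The last and hardest step is to check that the resulting $s$ is $F_\sigma$-measurable for \emph{all} open $U\subset\ddot\II^2$. Here the main obstacle is that $\ddot\II^2$ is not hereditarily Lindel\"of — it contains the uncountable discrete subspace $\{\langle x_0,x_1\rangle:x\in\II\}$ — so, unlike in Lemma~\ref{l:I}, one cannot reduce $U$ to a countable subfamily of basic rectangles. Instead I would write $U=\bigcup_\gamma A_\gamma\times B_\gamma$ and split each factor preimage $\sigma^{-1}[A_\gamma]$, $\tau^{-1}[B_\gamma]$ into an open strip plus corrections carried by the vertical, resp. horizontal, lines through the endpoints of the projected intervals. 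The strip-by-strip products assemble into an open (hence $F_\sigma$) set; the strip-by-correction products are governed, line by line, by the $\Delta^0_2$ straight-section property secured above; and the correction-by-correction (corner) contributions land in $Z_{00}\cup Z_{11}$ along anti-diagonals and in $Z_{01}\cup Z_{10}$ along diagonals. I expect the core difficulty to be exactly this corner part: although it meets uncountably many of those lines, it is swept out by the countably many Borel monotone graphs that cross each given line, and it is the Borel monotone structure furnished by Lemmas~\ref{l:i} and \ref{l:d} that should let one express it as a countable union of Borel sets and conclude that $s^{-1}[U]$ is $F_\sigma$.
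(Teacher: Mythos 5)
Your instincts here largely match the paper's actual proof: the paper also builds the selection from the increasing/decreasing dichotomy for Borel monotone graphs, invokes Lemmas~\ref{l:i} and \ref{l:d} at the decisive moment, and its partition does satisfy your Sierpi\'nski conditions. But two of your steps have genuine gaps. The first is the invariant you impose on the recursion: ``every vertical and every horizontal line meets only countably many of the graphs'' is impossible to arrange. Your graphs must cover $\II^2$ (every point needs a value of $k$), each strictly monotone \emph{function} meets each vertical or horizontal line in at most one point, and each such line has $\mathfrak c$ points; hence under $\w_1=\mathfrak c$ every line meets uncountably many of the graphs, no matter how the recursion is run. The paper needs no such control. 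It enumerates the family $\M$ of \emph{all} infinite strictly monotone Borel functions as $\{f_\alpha\}_{\alpha<\w_1}$ (this is where CH enters) and declares $z$ to be of increasing or decreasing type according to the type of the \emph{first} $f_\alpha$ containing $z$. The countability this buys is dual to what you ask for: on each \emph{decreasing} Borel curve $f_\beta$, the points of increasing type lie on the countably many increasing curves $f_\alpha$ with $\alpha\le\beta$, each meeting $f_\beta$ at most once. Note also that the paper's selection is not the symmetric one you envisage: it takes $\langle x_1,y_1\rangle$ on $\mathrm L$ and $\langle x_1,y_0\rangle$ on $\Gamma$, so the first coordinate sits constantly at level $1$ (that is, $Z_{00}=Z_{01}=\emptyset$); this asymmetric choice makes your vertical-line conditions trivial rather than something the construction has to fight for.

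The second gap is that your closing paragraph is a plan, not a proof, and it defers exactly the step on which the lemma turns. The phrase ``swept out by the countably many Borel monotone graphs that cross each given line'' begs the question, since by the previous paragraph uncountably many graphs cross each line. What the paper actually does at this point is concrete and unavoidable: for $z=\langle x,y\rangle\in\mathrm L$ the preimage of a basic box $[x_1,a_0)\times[y_1,b_0)$ is identified with the half-open rectangle $[x,a)\times[y,b)$; these rectangles are grouped by their rational outer corner $\langle r,q\rangle$; and Claims~\ref{cl4}--\ref{cl6} show that each (typically uncountable) union $\bigcup_z [x,r)\times[y,q)$ equals a countable subunion, because the corners left uncovered by a suitably chosen countable subfamily form a \emph{strictly decreasing function} $D_{r,q}$, which Lemma~\ref{l:d} embeds into a Borel strictly decreasing $f_\alpha$, whereupon the first-hit rule forces $D_{r,q}$ to be countable. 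Together with Lemma~\ref{l:I} for the part of $U$ outside $(0_1,1_0)^2$, this rectangle reduction \emph{is} the measurability proof; nothing in your sketch produces an argument of this kind, so the proposal as it stands does not establish the lemma.
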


\begin{proof} Let $\M$ be the set of infinite strictly monotone Borel functions $f\subset\II^2$. Since $\w_1=\mathfrak c$, the set $\M$ can be written as $\M=\{f_\alpha\}_{\alpha<\w_1}$.  It is clear $\bigcup_{\alpha<\w_1}f_\alpha=\II^2$. So, for any point $z\in\II^2$ we can find the smallest ordinal $\alpha_z<\w_1$ such that $z\in f_{\alpha_z}$. Consider the sets 
$$
\begin{aligned}
\mathrm{L}&:=\{z\in \II^2:f_{\alpha_z}\mbox{ is strictly increasing}\}\mbox{ \ and \ }\\\Gamma&:=\{z\in\II^2:\mbox{$f_{\alpha_z}$ is strictly decresing}\}=\II^2\setminus \mathrm L.
\end{aligned}$$ 
Define a selection $s:\II^2\to \ddot\II^2$ of the multimap $P^{-1}:\II^2\multimap \ddot\II^2$ letting $$s(\langle x,y\rangle)=\begin{cases}\langle x_1,y_1\rangle&\mbox{if $\langle x,y\rangle\in \mathrm L$},\\
\langle x_1,y_0\rangle&\mbox{if $\langle x,y\rangle\in \Gamma$},
\end{cases}
$$for $\langle x,y\rangle\in\II^2$.

We claim that the  function $s:\II^2\to \ddot\II^2$ is $F_\sigma$-measurable.
Given any open set $U\subset \ddot\II^2$, we should prove that its preimage $s^{-1}[U]$ of type $F_\sigma$ in $\II^2$. Consider the open subset $V:=U\cap (0_1,1_0)^2\subset \ddot\II^2$ of $U$. Using Lemma~\ref{l:I}, it can be shown that the set $s^{-1}[U\setminus V]\subset \II^2\setminus (0,1)^2$ is of type $F_\sigma$ in $\II^2$.
Therefore, it remains to show that the preimage $s^{-1}[V]$ is of type $F_\sigma$ in $\II^2$.

Let $\IQ:=\{\frac{n}{m}:n,m\in\mathbb N,\;n<m\}$ be the set of rational numbers in the interval $(0,1)$.

Consider the subsets $\mathrm L_V:=\mathrm L\cap s^{-1}(V)$ and $\Gamma_V:=\Gamma\cap s^{-1}(V)$. For every $\langle x,y\rangle\in  \mathrm L_V$ we have $s(\langle x,y\rangle)=\langle x_1,y_1\rangle\in V$ and by the definition of the topology of the split interval, we can find rational numbers $a(x,y),b(x,y)\in \IQ$ such that $x<a(x,y)$, $y<b(x,y)$ and $s(\langle x,y\rangle)=\langle x_1,y_1\rangle\in \big[x_1,a(x,y)_0\big)\times \big[y_1,b(x,y)_0\big)\subset V$. 
Then $$\big[x,a(x,y)\big)\times\big[y,b(x,y)\big)=s^{-1}\big[[x_1,a(x,y)_0)\times[y_1,b(x,y)_0)\big]\subset s^{-1}[V].$$

On the other hand, for every $\langle x,y\rangle\in \Gamma_V$ there are rational numbers $a(x,y),b(x,y)\in\IQ$ such that $x<a(x,y)$, $b(x,y)<y$ and $s(\langle x,y\rangle)=\langle x_1,y_0\rangle=\big[x_1,a(x,y)_0\big)\times \big(b(x,y)_1,y_0\big]\subset V$. In this case
 $$\big[x,a(x,y)\big)\times\big(b(x,y),y\big]=s^{-1}\big[[x_1,a(x,y)_0)\times(b(x,y)_1,y_0]\big)\subset s^{-1}[V].$$

It follows that
$$
s^{-1}[V]=\Big(\bigcup_{\langle x,y\rangle\in \mathrm L_V}\big[x,a(x,y)\big)\times\big[y,b(x,y)\big)\Big)\cup\Big(\bigcup_{\langle x,y\rangle\in \Gamma_V}\big[x,a(x,y)\big)\times\big(b(x,y),y\big]\Big).
$$

This equality and the following claim imply that the set $s^{-1}[V]$ is of type $F_\sigma$ in $\II^2$.

\begin{claim}\label{cl:main} There are countable subsets $\mathrm L'\subset\mathrm L_V$ and $\Gamma'\subset \Gamma_V$ such that
$$\bigcup_{\langle x,y\rangle\in \mathrm L_V} \big[x,a(x,y))\times[y,b(x,y)\big)=\bigcup_{\langle x,y\rangle\in \mathrm L'} \big[x,a(x,y)\big)\times\big[y,b(x,y)\big)$$and
$$\bigcup_{\langle x,y\rangle\in \Gamma_V} \big[x,a(x,y)\big)\times\big(b(x,y),y\big]=\bigcup_{\langle x,y\rangle\in \Gamma'} \big[x,a(x,y)\big)\times\big(b(x,y),y\big].$$
\end{claim}

We shall show how to find the countable set $\mathrm L'\subset\mathrm L_V$. The countable set $\Gamma'\subset\Gamma_V$ can be found by analogy.

 For rational numbers $r,q\in\IQ$, consider the set $$\mathrm L_{r,q}=\{\langle x,y\rangle\in \mathrm L_V:a(x,y)=r,\;b(x,y)=q\}$$and observe that $\mathrm L_V=\bigcup_{r,q\in\IQ}\mathrm L_{p,q}.$
 
 \begin{claim}\label{cl4} For any rational numbers $r,q\in\IQ$ there exists a countable subset $\mathrm L_{r,q}'\subset\mathrm L_{r,q}$ such that $$\bigcup_{\langle x,y\rangle\in\mathrm L'_{r,q}}[x,r)\times [y,q)=\bigcup_{\langle x,y\rangle\in\mathrm L_{r,q}}[x,r)\times [y,q).$$
 \end{claim}

\begin{proof} For every rational numbers $r'\le r$ and $q'\le q$, consider the numbers $$
\underline{y}(r'):=\inf\{y:\langle x,y\rangle\in \mathrm L_{r,q},\;x<r'\}\mbox{ and }
\underline{x}(q'):=\inf\{x:\langle x,y\rangle\in \mathrm L_{r,q},\;y<q'\}.$$
Choose countable subsets $\mathrm L_{r,q}^{r'\!,0},L_{r,q}^{0,q'}\subset \mathrm L_{r,q}$ such that 
$$\underline{y}(r')=\inf\{y:\langle x,y\rangle\in \mathrm L^{r'\!,0}_{r,q},\;x<r'\}\mbox{ and }
\underline{x}(q')=\inf\{x:\langle x,y\rangle\in \mathrm L^{0,q'}_{r,q},\;y<q'\}
$$
and moreover,
$$\underline{y}(r')=\min\{y:\langle x,y\rangle\in \mathrm L^{r'\!,0}_{r,q},\;x<r'\}\mbox{ \ if \ }\underline{y}(r')=\min\{y:\langle x,y\rangle\in \mathrm L_{r,q},\;x<r'\}.
$$
and
$$
\underline{x}(q')=\min\{x:\langle x,y\rangle\in \mathrm L^{0,q'}_{r,q},\;y<q'\}
\mbox{ \ if \ }
\underline{x}(q')=\min\{x:\langle x,y\rangle\in \mathrm L_{r,q},\;y<q'\}.$$

Consider the countable subset
 $$\mathrm L_{r,q}'':=\textstyle\bigcup\{\mathrm L^{r'\!,0}_{r,q}\cup\mathrm L_{r,1}^{0,q'}:r',q'\in \IQ,\; r'<r,\;q'<q\}$$
 of $\mathrm L_{r,q}$.
 
 \begin{claim}\label{cl5}
$\displaystyle\bigcup\limits_{\langle x,y\rangle\in L_{r,q}}\big([x,r)\times [y,q)\big)\setminus \{\langle x,y\rangle\}\subset \bigcup\limits_{\langle x,y\rangle\in \mathrm L_{r,q}''}[x,r)\times[y,q).$
\end{claim}

\begin{proof}
Fix any pairs $\langle x,y\rangle\in \mathrm L_{r,q}$ and  $\langle x',y'\rangle \in \big([x,r)\times [y,q)\big)\setminus \{\langle x,y\rangle\}$. Three cases are possible:
\begin{enumerate}
\item $x<x'<r$ and $y<y'<q$;
\item $x=x'$ and $y<y'<q$;
\item $x<x'<r$ and $y=y'$.
\end{enumerate}
In the first case there exist rational numbers $r',q'$ such that $x<r'<x'<r$ and $y<q'<y'<q$. The definition of $\underline{x}(q')$ ensures that $\underline{x}(q')\le x<x'$. By the choice of the family $\mathrm L_{r,q}^{0,q'}$, there exists $\langle x'',y''\rangle \in\mathrm L_{r,q}^{0,q'}\subset\mathrm L_{r,q}''$ such that $x''<x'<r$ and $y''<q'<y'<q$. Then $\langle x',y'\rangle\in [x'',r)\times[y'',q)$.

Next, assume that $x=x'$ and $y< y'<q$. In this case we can choose a rational number $q'$ such that $y<q'<y'$. It follows that $\underline{x}(q')\le x=x'$. If $\underline{x}(q')<x'$, then by the definition of the family $\mathrm L_{r,q}^{0,q'}$, there exists $\langle x'',y''\rangle \in\mathrm L_{r,q}^{0,q'}\subset \mathrm L_{r,q}$ such that $x''<x'<r$ and $y''<q'<y'<q$. Then $\langle x',y'\rangle\in [x'',r)\times[y'',q)$. 

So, we assume that $\underline{x}(q')=x'=x$ and hence $\underline{x}(q')=x=\min\{x'':\langle x'',y''\rangle\in \mathrm L_{p,q}:y''<q'\}$. In this case $x'=\underline{x}(q')=x''$ for some $\langle x'',y''\rangle\in\mathrm L_{r,q}^{0,q'}\subset\mathrm L_{r,q}''$ with $y''<q'<y'<q$. Then $\langle x',y'\rangle\in [x'',r)\times [y'',q)$.

By analogy, in the third case ($x<x'<r$ and $y=y''$) we can find a pair $\langle x'',y''\rangle\in L_{r,q}''$ such that $\langle x',y'\rangle\in [x'',r)\times[y'',q)$.
\end{proof}
Claim~\ref{cl5} implies that the set 
$$D_{r,q}=\Big(\bigcup_{\langle x,y\rangle \in \mathrm L_{r,q}}[x,r)\times [y,q)\Big)\setminus \Big(
\bigcup_{\langle x,y\rangle\in \mathrm L_{r,q}''}[x,r)\times[y,q)\Big)$$is contained in $\mathrm L_{r,q}.$
 \end{proof}

\begin{claim} The set $D_{r,q}$ is a strictly decreasing function.
\end{claim}

\begin{proof} First we show that $D_{r,q}$ is a function. Assuming that $D_{r,q}$ is not a function, we can find two pairs $\langle x,y\rangle,\langle x,y'\rangle\in D_{r,q}$ with $y<y'$. Applying Claim~\ref{cl5}, we conclude that $$\langle x,y'\rangle\in \big([x,r)\times[y,q)\big)\setminus\{\langle x,y\rangle\}\subset \bigcup_{\langle x'',y''\rangle\in L_{r,q}''}[x'',r)\times[y'',q)$$and hence $\langle x,y'\rangle\notin D_{r,q}$, which contradicts the choice of the pair $\langle x,y'\rangle$. This contradiction shows that $D_{r,q}$ is a function.

Assuming that $D_{r,q}$ is not strictly decreasing, we can find pairs $\langle x,y\rangle,\langle x',y'\rangle\in D_{r,q}$ such that $x<x'$ and $y\le y'$. 
Applying Claim~\ref{cl5}, we conclude that $$\langle x',y'\rangle\in \big([x,r)\times[y,q)\big)\setminus\{\langle x,y\rangle\}\subset \bigcup_{\langle x'',y''\rangle\in L_{r,q}''}[x'',r)\times[y'',q)$$and hence $\langle x',y'\rangle\notin D_{r,q}$, which contradicts the choice of the pair $\langle x,y'\rangle$. This contradiction shows that $D_{r,q}$ is strictly decreasing.
\end{proof}

\begin{claim}\label{cl6} The set $D_{r,q}$ is at most countable.
\end{claim}

\begin{proof} To derive a contradiction, assume that $D_{r,q}$ is uncountable.
By Lemma~\ref{l:d}, the strictly decreasing function $D_{r,q}$ is contained in some Borel strictly decreasing function, which is equal to $f_\alpha$ for some ordinal $\alpha<\w_1$. Since the intersection of a strictly increasing function and a strictly decreasing function contains at most one point, the set $$D_{r,q}'=\textstyle\bigcup\{D_{r,q}\cap f_\beta:\beta\le \alpha,\;f_\beta\mbox{ is strictly increasing}\}$$ is at most countable. We claim that $D_{r,q}=D'_{r,q}$. To derive a contradiction, assume that $D_{r,q}\setminus D'_{r,q}$ contains some pair $z=\langle x,y\rangle$. It follows from $z\in D_{r,q}\subset f_\alpha$ that $\alpha_z\le\alpha$.  Since $z\notin D_{r,q}'$, the strictly monotone function $f_{\alpha_z}\ni z$ is not strictly increasing and hence $f_{\alpha_z}$ is strictly decreasing. Then the definition of the set $\mathrm L$ guarantees that $z\notin \mathrm L$, which contradicts the inclusion $z\in D_{r,q}\subset \mathrm L_{r,q}\subset \mathrm L$. 
\end{proof}

Now consider the countable subset $\mathrm L'_{r,q}:=\mathrm L_{r,q}''\cup D_{r,q}$ of $\mathrm L_{r,q}$ and observe that
$$\displaystyle\bigcup\limits_{\langle x,y\rangle\in L_{r,q}}\big([x,r)\times [y,q)\big)\subset \bigcup\limits_{\langle x,y\rangle\in \mathrm L_{r,q}''}[x,r)\times[y,q).$$
This completes the proof of Claim~\ref{cl4}.

\begin{claim}\label{cl8} There exists a countable subset $\mathrm L'\subset \mathrm L_V$ such that
$$\bigcup_{\langle x,y\rangle\in \mathrm L'}\big([x,a(x,y))\times [y,b(x,y)\big)= 
\bigcup_{\langle x,y\rangle\in \mathrm L_V}\big([x,a(x,y))\times [y,b(x,y)\big).$$
\end{claim}

\begin{proof} By Claim~\ref{cl4}, for any rational numbers $r,q\in\IQ$ there exists a countable subset $\mathrm L_{r,q}'\subset \mathrm L_{r,q}$ such that 
\begin{multline*}
\bigcup_{\langle x,y\rangle\in \mathrm L'_{r,q}}\big([x,a(x,y))\times [y,b(a,y))\big)=\bigcup_{\langle x,y\rangle\in \mathrm L'_{r,q}}\big([x,r)\times [y,q)\big)=\\
=\bigcup\limits_{\langle x,y\rangle\in\mathrm  L_{r,q}}\big([x,r)\times [y,q)\big)=
\bigcup\limits_{\langle x,y\rangle\in\mathrm L_{r,q}}\big([x,a(x,y))\times [y,b(x,y))\big).
\end{multline*}
Since $\mathrm L_V=\bigcup_{r,q\in\IQ}\mathrm L_{r,q}$, the countable set $\mathrm L':=\bigcup_{r,q\in\IQ}\mathrm L'_{r,q}$ has the required property.
\end{proof} 

By analogy with Claim~\ref{cl8} we can prove

\begin{claim}\label{cl10} There exists a countable subset $\Gamma'\subset \Gamma_V$ such that
$$\bigcup_{\langle x,y\rangle\in \Gamma'}\big([x,a(x,y))\times (b(x,y),y]\big)= 
\bigcup_{\langle x,y\rangle\in \Gamma_V}\big([x,a(x,y))\times (b(x,y),y]\big).$$
\end{claim}
Claims~\ref{cl8} and \ref{cl10} complete the proof of Claim~\ref{cl:main} and the proof of Lemma~\ref{l:main}.
\end{proof}

\section{Acknowledgement}

The author expresses his sincere thanks to the Mathoverflow user @YCor (Yves Cornulier) for the idea of the proof of Lemma~\ref{l1} and to Du\v san Repov\v s for valuable information on measurable selectors.
\newpage

\end{document}